\newcommand{\Tr}{\mathop{\bf Tr}}
\newcommand{\ca}[1]{\mathcal{#1}}
\newcommand{\bb}[1]{\mathbb{#1}}
\newcommand{\rank}{\text{\bf rank}}
\newcommand{\argmin}{\mathop{\rm arg\,min}}
\newcommand{\leqnomode}{\tagsleft@true}
\newcommand{\reqnomode}{\tagsleft@false}
\newtheorem{theorem}{Theorem}[section]
\newtheorem{lemma}[theorem]{Lemma}
\newtheorem{proposition}[theorem]{Proposition}
\newtheorem{remark}[theorem]{Remark}
\tikzset{%
    add/.style args={#1 and #2}{
        to path={%
 ($(\tikztostart)!-#1!(\tikztotarget)$)--($(\tikztotarget)!-#2!(\tikztostart)$)%
  \tikztonodes},add/.default={.2 and .2}}
}
\newtheorem*{assumption*}{\assumptionnumber}
\providecommand{\assumptionnumber}{}
\newenvironment{assumption}[1]
 {%
  \renewcommand{\assumptionnumber}{Assumption #1}%
  \begin{assumption*}%
  \protected@edef\@currentlabel{#1}%
 }
 {%
  \end{assumption*}
 }
\title{Global Convergence of Policy Gradient Algorithms for\\Indefinite Least Squares Stationary Optimal Control}
\author{Jingjing Bu and Mehran Mesbahi
\thanks{Submitted to IEEE Control Systems Letters. The research of the authors has been supported by AFOSR 
Grant No. FA9550-16-1-0022 and DARPA Lagrange Grant FA8650-18-2-7836.}
\thanks{The authors are with the University of Washington, Seattle, WA 98195, USA; Emails: bujing+mesbahi@uw.edu}
}
\begin{document}

\maketitle
\thispagestyle{empty}
\pagestyle{empty}
\begin{abstract}
  We consider policy gradient algorithms for the 
  indefinite least squares stationary optimal control, 
  e.g., linear-quadratic-regulator (LQR) with indefinite state and input penalization matrices. Such a setup has important applications in control design with conflicting objectives, such as linear quadratic dynamic games. We show the global convergence of gradient, natural gradient and quasi-Newton policies for this class of indefinite least squares problems.
\end{abstract}
%
\section{Introduction}
\label{sec:intro}
Least squares stationary optimal control provides an effective synthesis procedure for linear control systems
since Kalman's original work in the 1960s~\cite{kalman1960contributions}. This setting was later extended beyond positive semidefinite cost structure by Willems~\cite{willems1971least}. It is known that similar to standard LQR, this setup can be examined using the Algebraic Riccati Equation (ARE); DARE refers to the discrete analogue of this matrix equation.
%
Historically, a large number of works have studied the solution of ARE and DARE, including approaches based on iterative algorithms~\cite{Hewer1971TAC},\footnote{In Hewer's original work, $Q$ and $R$ are positive definite. However, the algorithm still converges even for the indefinite cost structure~\cite{Lancaster1995algebraic}.} algebraic solution methods~\cite{Lancaster1995algebraic}, and semidefinite programming~\cite{Balakrishnan2003TAC}.

Although the cost function plays a fundamental role in the least squares optimal control, it is generally not ``recommended'' to {\em directly} compute the optimal gain (policy) using this cost without solving the associated Riccati equation.\footnote{In this note, feedback gain, feedback control and feedback policy are used interchangeably.}  This approach, in the meantime, is in sharp contrast to how one would typically go about minimizing a cost function over the variable of interest in introductory optimization, say, through gradient descent.\footnote{This is essentially due to the dynamic nature of the constraint set.}
Recently, there has been a surge of interest in constructing optimal control strategies directly, viewing control synthesis through the lens of first order methods.\footnote{One might as well extrapolate that these methods provide a streamline recipe for learning optimal feedback gains in real-time.}
Adopting such a point of view has been partially inspired by the application of
learning algorithms in control, such as Reinforcement Learning (RL), where 
using principles of (approximate) dynamic programming, one can devise 
real-time model-free methods for both continuous-time and discrete-time optimal control problems~\cite{Jiang2012Auto,Young2012Auto,Lee2019TAC,Bradtke1994ACC,Lewis2009CSM,lewis2012reinforcement,Chun2016IJC,bertsekas2005dynamic}. 
The RL perspective not only provides more insights into the synthesis problem, but also can be extended to model-free settings by means of stochastic (zeroth-order) optimization~\cite{conn2009introduction, nemirovski2009robust}.
 However, policy iteration is inherently prohibitive for an infinite horizon cost structure that is undiscounted and unbounded per stage~\cite{bertsekas2005dynamic}. \par
The main contribution of this note is to extend policy based algorithms beyond positive (semi)definite cost structures considered in~\cite{fazel2018global, bu2019lqr}. More specifically, we show that under mild assumptions, even when the state and cost penalization matrices are indefinite in the least squares optimal control, gradient policy (respectively, natural gradient and quasi-Newton policies) converges to the global minimizer at a linear (respectively, linearly and $Q$-quadratic) rate. Along the way, we devise a distinct approach for arguing the stability of the iterative process as compared with those adopted in previous works.\footnote{The proposed technique also provides an alternative way to argue stability properties of the iterative process under standard LQR assumptions.}\par 
The note is organized as follows. In \S\ref{sec:notation}, we introduce the notation and preliminaries. \S\ref{sec:dlqr} is devoted to the LQR setup, analytical properties of the cost function, a ``mild'' assumption, and its implications. In \S\ref{sec:convergence}, we derive the corresponding stepsizes for gradient descent (GD), natural gradient descent (NGD), and quasi-Newton (QN) iterations; we then show the global linear (respectively, linear and $Q$-quadratic) convergence of gradient policy (respectively, natural gradient policy and quasi-Newton policy) under the proposed stepsizes. A numerical example is provided in \S\ref{sec:numerical}. The note is concluded in \S\ref{sec:conclusion}. 
\section{Notation and Preliminaries}
\label{sec:notation}We denote by ${\bb M}_{n \times m}(\bb R)$ the set of $n \times m$ real matrices. $\bb R^n$ denotes the $n$-dimensional real Euclidean space; when $n=1$, this set is identified with the set of real numbers. Other notation include $A^\top$, $\rho(A)$, $\Tr (A)$, representing the transpose, spectral radius, and trace of the matrix $A$, respectively.
The real inner product between a pair of vectors $x$ and $y$ is denoted by $\langle x,y \rangle$. $\|A\|_2$ denotes the spectral (operator) norm of a square matrix $A$ and $\|A\|_{F}$ denotes its Frobenius norm.\footnote{2-norm is assumed when we use $\|.\|$.}
Lastly, the notation $A \succeq B$ for two symmetric matrices refers to the positive semi-definiteness of their difference $A-B$; analogously for positive definiteness of this difference we use $A \succ B$. We let $\lambda_i(A)$ denote the eigenvalues of a square matrix $A$. These eigenvalues are indexed in an increasing order with respect to their real parts, i.e.,
\begin{align*}
  {\bf Re}(\lambda_1(A)) \le \dots \le {\bf Re}(\lambda_n(A)).
  \end{align*}
  If $A$ is symmetric, the ordering becomes $\lambda_1(A) \le \dots \le \lambda_n(A)$. 
  When $A \succeq 0$, $\|A\| = \lambda_n(A)$ and we shall use these interchangeably.
We use $C^{\omega}(U)$ to denote the set of real analytic functions over an open set $U \subseteq \bb R^n$. 
A square matrix $A \in \bb M_{n \times n}(\bb R)$ is \emph{Schur} if $\rho(A) < 1$.
A pair $(A, B)$ is stabilizable if there exists some $K$ for which $A-BK$ is Schur.
Given a pair of system matrices $(A, B)$, $\ca S$ denotes the set of Schur stabilizing feedback gains, 
$$\ca S =\{K \in {\bb M}_{m \times n}(\bb R): \rho(A-BK) < 1\}.$$
For the pair $(A, B)$, we say that $K$ is stabilizing if $A-BK$ is Schur; it is \emph{marginally stabilizing} or \emph{almost stabilizing} if $\rho(A-BK) = 1$. An eigenvalue $\lambda$ of $A \in {\bb M}_{n \times n}(\bb R)$ is called $(C, A)$-observable if $$\rank \left( \begin{pmatrix} A - \lambda I \\ C\end{pmatrix}\right) = n,$$
for a given $C \in {\bb M}_{p \times n}(\bb R)$; $p$ is the dimension of the output of a linear system.

\section{Problem Setup}
\label{sec:dlqr}
In the standard least squares (stationary) optimal control, we consider a (discrete-time) linear time invariant model of the form,
\begin{align}
  x_{k+1} = Ax_k + B u_k, \label{LTI}
\end{align}
where $A \in {\bb M}_{n \times n}( \bb R)$, $B \in  {\bb M}_{n \times m}( \bb R)$ and $(A, B)$ is stabilizable. The corresponding LQR problem is the optimization problem of devising a linear feedback gain $K \in {\bb M}_{m \times n} (\bb R)$ for which $u_k = -K x_k$, minimizing,\footnote{The condition that  $u_k$ has the form $-K x_k$ is not set a priori in the LQR formulation; this feedback form is typically shown via the adoption of a dynamic programming step.}
\begin{align*}
  J (x_0) &= \sum_{k=0}^{\infty} \left[ \langle x_k, Q x_k\rangle  + \langle u_k, R u_k\rangle \right],
\end{align*}
where $x_0$ is the initial condition, and the quadratic cost is parameterized by 
$Q = Q^{\top}$ and $R = R^{\top}$; note that we \emph{do not} require positive (semi-)definiteness of $Q$ and $R$. Such a generalization is not only of theoretical interest but also has important applications in network synthesis and stability theory~\cite{willems1971least}.
In order to update the feedback gain (policy) directly, it will conceptually be appealing to consider the cost as a matrix function over the set of feedback gains. With this aim in mind, we may define $J_{x_0} \colon {\bb M}_{m \times n}(\bb R) \to \bb R$ as,
\begin{equation}
  \label{eq:naive_cost_function}
  \resizebox{0.92\linewidth}{!}{$
    J_{x_0}( K) = \sum_{j=0}^{\infty}  \left[ \langle (A-BK)^j x_0, (Q+K^{\top}RK)(A-BK)^j x_0\rangle \right],
                  $}
\end{equation}
for some fixed initial condition $x_0 \in \bb R^n$. Note that the cost function $J$ is a function of the policy $K$ and initial condition $x_0$. Since we are interested in \emph{optimal policy} independent of initial conditions, naturally, we should reformulate the cost function to reflect this independence. Indeed, this point has been discussed in~\cite{bu2019lqr} where it is argued that such a formulation is  necessary for the cost function to be well defined (see details in \S III~\cite{bu2019lqr}). The independence with respect to the initial condition can be achieved by either sampling $x_0$ from a distribution with full-rank covariance~\cite{fazel2018global}, or choosing a spanning set $\{z_1, \dots, z_n\} \subseteq \bb R^n$~\cite{bu2019lqr}, and defining the value function over $\ca S$ as,
\begin{align}
  \label{f(k)}
  f(K) = \sum_{i=1}^n J_{z_i}(K),
\end{align}
where $J_{z_i}(K)$ is the cost by choosing initial state $x_0$ as $z_i$ and letting $u_k = Kx_k$. Note that over the set $\ca S$, $f$ admits a compact form $  f(K) = \Tr( X {\bf \Sigma})$,
where $ {\bf \Sigma} = \sum_{i=1}^n z_i z_i^{\top}$ and $X$ is the solution to the Lyapunov equation,
\begin{align}
  \label{eq:lyapunov_matrix}
  (A-BK)^{\top}X(A-BK) + Q + K^{\top}RK = X.
\end{align}

How the cost function $f$ behaves near the boundary $\partial \ca S$ is of paramount importance in the design of iterative algorithms for least squares optimal control problems. In the standard setting, the cost function diverges to $+\infty$ when the feedback gain approaches the boundary of this set (see~\cite{bu2019lqr} for details). In fact, this property guarantees stability of the obtained solution via first order iterative algorithms for the suitable choice of stepsize. However, the behavior of $f$ on the boundary $\partial \ca S$ could be more intricate. For example, if $K \in \partial \ca S$, i.e., $\rho(A-B K) = 1$, then it is possible that the cost is still finite. This happens when an eigenvalue of $A-B K$ on the unit disk in the complex plane is not $(Q+K^{\top} R K , A-BK )$-observable. To see this, we note that for every $\omega_i$, the series
\begin{equation*}
  \resizebox{\linewidth}{!}{
  $J_{\omega_i}(K) =  \omega_i^{\top} \left(\sum_{j=0}^{\infty} ((A-BK)^\top)^{j} (Q + K^{\top} R K)(A-B K)^{j} \right)\omega_i$.}
\end{equation*}
is convergent to a finite (real) number if the marginally stable modes are not detectable.
 Even on $\bar{\ca S}^c$ (complement of closure of $\ca S$), $f$ could be finite if all non-stable modes of $A-B K$ are not $(Q+K^{\top} R K , A-BK)$-observable. The complication suggests that the function value is no longer a valid indictor of stability. We remark that such a situation does not occur in the LQ setting examined in~\cite{fazel2018global, bu2019lqr}, as it has been assumed that $Q$ is positive definite.
 \subsection{Analytical properties of the indefinite cost function}
 \label{sec:lqr_analytical}
 In this section, we collect some useful analytic characterizations of $f(K)$. To simplify the notation, in the rest of this paper, we set,
\begin{align*}
     A_K \coloneqq A-BK,  \quad \mbox{and} \quad {\bf N}_K \coloneqq RK-B^{\top} X(A-BK);
\end{align*}
when the context is clear, we will write ${\bf N}$ instead of ${\bf N}_K$; in describing the iterative process on the gain matrix (when $K$ is updated), we shall denote ${\bf N}_{K_j}$ as ${\bf N}_j$.
\begin{proposition}
  \label{prop:analytical}
  The indefinite least squares optimal control problem~(\ref{f(k)}) on the set of stabilizing feedback gains has the following properties:
  \begin{enumerate}
  \item 
  The set $\ca S$ is regular open, contractible, and unbounded when $m \ge 2$ and the boundary $\partial \ca S$ is precisely the set $\ca B=\{K \in {\bb M}_{m \times n}(\bb R): \rho(A-BK) = 1\}$. 
  \item 
 For the cost $(\ref{f(k)})$, one has $f \in C^{\omega}(\ca S)$.
\item The gradient of $f$~(\ref{f(k)}) is given by
  \begin{align*}
    \nabla f(K) = 2(RK-B^{\top}X A_K) Y_K,
    \end{align*}
    where $Y_K$ solves the Lyapunov matrix equation,
    \begin{align}
      \label{eq:lyapunov_Y}
      A_{K}Y A_K^{\top} + {\bf \Sigma} = Y.
      \end{align}
 \item 
   Let $K, \tilde{K} \in \bar{\ca S}$;\footnote{$\bar{\ca S}$ is the closure of $\ca S$.}; suppose that the corresponding Lyapunov matrix equations~\eqref{eq:lyapunov_matrix} have symmetric solutions $X$ and $\tilde{X}$, respectively.\footnote{Note that the assumption clearly holds if $K, \tilde{K} \in \ca S$. It will also holds if $K \in \partial \ca S$ and the eigenvalues of $A-BK$ on the unit disk are not $(Q+K^{\top}RK, A-BK)$-observable.} Namely,
\begin{align*}
  A_K^{\top}X A_K + Q + K^{\top} R K = X,\\
  A_{\tilde{K}}^{\top}\tilde{X} A_{\tilde{K}} + Q + \tilde{K}^{\top}R \tilde{K} = \tilde{X}.
  \end{align*}
   Then we have
      \begin{align*}
         & A_{\tilde{K}}^{\top} (X-\tilde{X}) A_{\tilde{K}} + (K-\tilde{K})^{\top}{\bf N}_K + {\bf N}_K^{\top}(K-\tilde{K})\\
                      &\quad  - (K-\tilde{K})^{\top}(R+B^{\top} X B)(K-\tilde{K})=X - \tilde{X}.
        \end{align*}
      \item Suppose that $K_* \in \argmin_{K \in \ca S} f(K)$. Then
        \begin{align*}
          \tau_1 \|K-K_*\|_F^2 \le f(K) - f(K_*) \le \tau_2 \langle {\bf N}_K, {\bf N}_K\rangle,
        \end{align*}
        where
        \begin{align*}\tau_1 = \lambda_1(Y)\lambda_1(R+B^{\top}XB), \;
  \tau_2 = \frac{\lambda_n(Y_*)}{\lambda_1(R+B^{\top}X B)}, 
  \end{align*}
  and $Y_*$ solves the Lyapunov equation~\eqref{eq:lyapunov_Y} with $K_*$.
        \end{enumerate}
\end{proposition}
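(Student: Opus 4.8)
The plan is to convert part~(d) into an exact cost-difference identity and then extract the two inequalities separately---the upper bound by completing a square, the lower bound by solving a Lyapunov recursion. Write $E \coloneqq K - K_*$; let $X$, $X_*$ solve~\eqref{eq:lyapunov_matrix} at $K$, $K_*$, let $Y \coloneqq Y_K$ and $Y_* \coloneqq Y_{K_*}$ solve~\eqref{eq:lyapunov_Y}, and set $H \coloneqq R + B^\top X B$. I would first record three facts used throughout. Since $\{z_i\}$ spans $\bb R^n$, ${\bf \Sigma} \succ 0$, so $Y \succeq {\bf \Sigma} \succ 0$ and $Y_* \succ 0$; the ``mild assumption'' supplies $H \succ 0$ (and $R + B^\top X_* B \succ 0$ at the optimizer); and, because $\ca S$ is open with $f \in C^\omega(\ca S)$ by parts~(a)--(b), $K_*$ is interior, so part~(c) gives $\nabla f(K_*) = 2{\bf N}_{K_*} Y_* = 0$, whence ${\bf N}_{K_*} = 0$ since $Y_* \succ 0$. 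This stationarity is the lever for the lower bound.

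For the upper bound I would apply part~(d) to the pair $(K, K_*)$, multiply the resulting matrix identity by $Y_*$, and take the trace. Invoking $A_{K_*} Y_* A_{K_*}^\top = Y_* - {\bf \Sigma}$ from~\eqref{eq:lyapunov_Y}, the two $\Tr\big((X-X_*)Y_*\big)$ terms cancel and leave the exact identity
\begin{align*}
  f(K) - f(K_*) = \Tr\big((X - X_*){\bf \Sigma}\big) = 2\Tr\big({\bf N}_K^\top E\, Y_*\big) - \Tr\big(E^\top H E\, Y_*\big).
\end{align*}
Setting $W \coloneqq H^{1/2} E Y_*^{1/2}$ and $a \coloneqq H^{-1/2}{\bf N}_K Y_*^{1/2}$, the right-hand side equals $2\langle a, W\rangle - \langle W, W\rangle \le \langle a, a\rangle = \Tr\big({\bf N}_K^\top H^{-1}{\bf N}_K\, Y_*\big)$. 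Finishing with $\Tr(M Y_*) \le \lambda_n(Y_*)\Tr(M)$ for the positive semidefinite $M \coloneqq {\bf N}_K^\top H^{-1}{\bf N}_K$ and $\Tr(M) \le \lambda_1(H)^{-1}\langle {\bf N}_K,{\bf N}_K\rangle$ produces $f(K) - f(K_*) \le \tau_2\langle {\bf N}_K,{\bf N}_K\rangle$ with $\tau_2 = \lambda_n(Y_*)/\lambda_1(R+B^\top X B)$.

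For the lower bound I would instead apply part~(d) to the pair $(K_*, K)$. Now the first-order term carries ${\bf N}_{K_*} = 0$, so both cross terms drop and the identity collapses to a Lyapunov equation in $X - X_*$,
\begin{align*}
  X - X_* = A_K^\top (X - X_*) A_K + E^\top\big(R + B^\top X_* B\big) E,
\end{align*}
solvable because $A_K$ is Schur for $K \in \ca S$, giving $X - X_* = \sum_{j\ge 0}(A_K^\top)^j E^\top(R+B^\top X_* B)E\, A_K^j$. Tracing against ${\bf \Sigma}$ and recognizing $Y = \sum_{j\ge0} A_K^j {\bf \Sigma}(A_K^\top)^j$ yields the exact identity $f(K) - f(K_*) = \Tr\big(E^\top(R+B^\top X_* B)E\, Y\big)$, which is manifestly nonnegative and is bounded below by $\lambda_1(Y)\,\lambda_1(R+B^\top X_* B)\,\|E\|_F^2 = \tau_1\|K-K_*\|_F^2$ (the curvature matrix entering $\tau_1$ being the one evaluated at $K_*$).

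The trace and eigenvalue manipulations are routine; the real work is positivity and bookkeeping. The square completion and the denominators of $\tau_1,\tau_2$ are legitimate only if $R+B^\top X B \succ 0$ (and its analogue at $K_*$), so the hard part will be to extract this from the paper's standing assumption rather than from $R \succ 0$, which is unavailable in the indefinite regime. The other delicate point is to use the \emph{dual} equation~\eqref{eq:lyapunov_Y}, not~\eqref{eq:lyapunov_matrix}, when cancelling the $\Tr((X-X_*)Y_*)$ terms, and to confirm ${\bf N}_{K_*}=0$ via $Y_*\succ0$ before discarding the cross terms in the lower-bound step; mishandling either silently corrupts the constants.
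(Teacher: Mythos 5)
Your argument addresses only part~(e); parts (a)--(d) are taken as given. That is a defensible reading---the paper itself offers no proof of this proposition and defers it entirely to~\cite{bu2019lqr}---and for part~(e) your route (the cost-difference identity obtained by tracing part~(d) against $Y_*$, completion of squares for the upper bound, and the collapsed Lyapunov recursion at the stationary point for the lower bound) is exactly the mechanism of that reference, so there is no genuinely different approach to compare.

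Two loose ends remain, both of which you flag but neither of which you close. First, the invertibility of $H=R+B^{\top}XB$ at an arbitrary $K\in\ca S$: this does follow, but only if you run your two steps in the opposite order. The lower-bound identity $X-X_*=\sum_{j\ge 0}(A_K^{\top})^{j}E^{\top}(R+B^{\top}X_*B)E\,A_K^{j}$ needs only $R+B^{\top}X_*B\succ 0$ (second-order condition at the interior minimizer) and yields $X\succeq X_*$, whence $R+B^{\top}XB\succeq R+B^{\top}X_*B\succ 0$; only then is the square completion in your upper bound legitimate. As written, the upper bound comes first and silently assumes what the lower bound later provides. Second, your lower bound produces the constant $\lambda_1(Y)\lambda_1(R+B^{\top}X_*B)$, with the curvature evaluated at $K_*$, whereas the proposition writes $\tau_1$ with $\lambda_1(R+B^{\top}XB)$, i.e., $X$ at $K$. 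Since $X\succeq X_*$, your constant is the weaker of the two, so strictly speaking you have not established the inequality with the stated $\tau_1$ unless the $X$ appearing there is read as $X_*$ (which your derivation suggests is the intended meaning, but which you should say explicitly rather than leave as a parenthetical). Everything else---$Y\succeq{\bf \Sigma}\succ 0$ from the spanning set, ${\bf N}_{K_*}=0$ from $\nabla f(K_*)=2{\bf N}_{K_*}Y_*=0$ with $Y_*\succ 0$, the cancellation of the $\Tr\bigl((X-X_*)Y_*\bigr)$ terms via the dual equation~\eqref{eq:lyapunov_Y}, and the trace and eigenvalue estimates---is correct.
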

The proofs of these results can be found in~\cite{bu2019lqr}. We emphasize that ($e$) holds only if $\argmin_{K \in \ca S} f(K) \neq \emptyset$, namely, there exists $K_* \in \ca S$ such that $f(K) \ge f(K_*)$ for every $K \in \ca S$. In the next subsection, we shall elaborate on a ``mild'' assumption to ensure that this condition holds. 
 \subsection{A key assumption and its consequences}
Throughout the manuscript, we have the following standing assumption.
\begin{assumption}{1}
  \label{assump1}
  There exists a \emph{strict local minimizer} of $f(K)$ over $\ca S$. In other words, there exists some $K_* \in \ca S$ and an open neighborhood $B_{\delta}(K_*) = \{K : \|K-K_*\|_F < \delta\}$, such that $f(K_*) < f(K)$ for every $K \in B_{\delta}(K_*) \cap \ca S$.
\end{assumption}
\begin{remark}
  The seminal work of Willems~\cite{willems1971least} explores many facets of the least squares optimal control with indefinite $Q$ and $R$;\footnote{An our adopted terminology is in his honor.} in particular, this work examines conditions for which the above assumption holds. We will not discuss these conditions and instead refer the reader to~\cite{willems1971least} and references therein.
\end{remark}
We observe several implications of this assumption.
\begin{proposition}
  Suppose that $K_*$ is the strict local minimizer of $f(K)$ over $\ca S$ and $X_*$ is the corresponding value matrix.
  Then, 
\begin{enumerate}
  \item $X_* = X_*^{\top}$,
  \item $R + B^{\top} X_* B \succ 0$,
    \item $X_*$ solves the DARE~\eqref{eq:dare},
      \begin{equation}
        \label{eq:dare}
    X = A^{\top}XA + Q - A^{\top}XB(R+B^{\top}XB)^{-1}B^{\top}XA,
        \end{equation}
  \item The minimizer $K_*$ is the unique global minimizer,
  \item $X_*$ is the maximal solution to DARE~\eqref{eq:dare} and is unique among all \emph{almost stabilizing solutions} of~\eqref{eq:dare}.
  \end{enumerate}
\end{proposition}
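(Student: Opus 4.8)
The plan is to drive the whole argument off the first-order stationarity condition together with the exact one-step comparison identity of Proposition~\ref{prop:analytical}(d), specialized to $K_*$. Since $K_*\in\ca S$, the matrix $A_{K_*}$ is Schur and the solution $Y_*$ of~\eqref{eq:lyapunov_Y} equals $\sum_{j\ge0}A_{K_*}^j{\bf\Sigma}(A_{K_*}^\top)^j\succ0$ because ${\bf\Sigma}\succ0$; hence $\nabla f(K_*)=2{\bf N}_{K_*}Y_*=0$ forces ${\bf N}_{K_*}=0$. Part~(a) is then immediate: $X_*$ solves the discrete Lyapunov equation~\eqref{eq:lyapunov_matrix} with Schur $A_{K_*}$ and symmetric data $Q+K_*^\top R K_*$, so by uniqueness of the Lyapunov solution (equivalently, from the manifestly symmetric series for $X_*$) we get $X_*=X_*^\top$.

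For part~(b) I would substitute ${\bf N}_{K_*}=0$ into Proposition~\ref{prop:analytical}(d) with $K=K_*$ and $\tilde K=K_*-\Delta$. The identity collapses to a Lyapunov equation whose \emph{exact} solution (valid for every $\Delta$ small enough that $\tilde K\in\ca S$, using that $\ca S$ is open) is
\[
\tilde X-X_*=\sum_{j=0}^{\infty}(A_{\tilde K}^\top)^j\,\Delta^\top\!\left(R+B^\top X_* B\right)\!\Delta\, A_{\tilde K}^j .
\]
Since $f(\tilde K)-f(K_*)=\Tr\!\big((\tilde X-X_*){\bf\Sigma}\big)$ and ${\bf\Sigma}\succ0$, testing rank-one perturbations $\Delta=uv^\top$ reduces the sign of $f(\tilde K)-f(K_*)$ to that of $u^\top(R+B^\top X_* B)u$. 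A strictly negative value yields a descent direction; and — the delicate point that uses \emph{strictness} of the local minimizer, not mere positive semidefiniteness of a Hessian — a null vector $u$ of $R+B^\top X_* B$ makes the entire right-hand side vanish, so $f$ is \emph{constant} along $\tilde K=K_*-\epsilon uv^\top$, again contradicting strict minimality. Ruling out both cases gives $R+B^\top X_* B\succ0$. Part~(c) is then a direct computation: ${\bf N}_{K_*}=0$ together with invertibility of $R+B^\top X_* B$ gives $K_*=(R+B^\top X_* B)^{-1}B^\top X_* A$, and substituting this into~\eqref{eq:lyapunov_matrix} and completing the square collapses it to the DARE~\eqref{eq:dare}.

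Part~(d) reuses the same identity, now globally. For \emph{any} $\tilde K\in\ca S$ the comparison with $K=K_*$ and ${\bf N}_{K_*}=0$ yields $\tilde X-X_*=\sum_{j\ge0}(A_{\tilde K}^\top)^j(K_*-\tilde K)^\top(R+B^\top X_* B)(K_*-\tilde K)A_{\tilde K}^j\succeq0$, since $R+B^\top X_* B\succ0$; hence $f(\tilde K)\ge f(K_*)$ and $K_*$ is a global minimizer. Equality forces $\Tr\!\big((\tilde X-X_*){\bf\Sigma}\big)=0$ with $\tilde X-X_*\succeq0$ and ${\bf\Sigma}\succ0$, so $\tilde X=X_*$; reading off the $j=0$ term then gives $(K_*-\tilde K)^\top(R+B^\top X_* B)(K_*-\tilde K)=0$ and therefore $\tilde K=K_*$, proving uniqueness of the global minimizer.

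Part~(e) is where I expect the real work. Any symmetric DARE solution $X$ with gain $K_X=(R+B^\top X B)^{-1}B^\top X A$ satisfies ${\bf N}_{K_X}=0$ by the same completing-the-square identity, and the comparison with $K=K_X,\ \tilde K=K_*$ (using that $A_{K_*}$ is Schur) writes $X-X_*$ as a convergent Gramian-type series, giving $X\preceq X_*$ — i.e.\ maximality — for every solution with $R+B^\top X B\succeq0$. For uniqueness among almost stabilizing solutions I would first exclude unit-circle eigenvalues of $A_{K_X}$: pairing the comparison identity (with $K=K_*,\ \tilde K=K_X$) against an eigenvector $v$ with $|\lambda|=1$ annihilates the left-hand side and forces $(K_*-K_X)v=0$, whence $A_{K_*}v=A_{K_X}v=\lambda v$, contradicting that $A_{K_*}$ is Schur; thus every almost stabilizing solution is in fact stabilizing, and uniqueness of the stabilizing solution (equivalently, of the stable deflating subspace of the associated symplectic pencil) identifies it with $X_*$. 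The main obstacle is organizing part~(e) so that the sign of $R+B^\top X B$ is controlled in whichever direction of the comparison one chooses to solve; this is precisely the point at which one leans on the classical maximal/almost-stabilizing solution theory of the DARE.
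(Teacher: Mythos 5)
Your proposal is correct, and although it shares the paper's skeleton (stationarity plus $Y_{K_*}\succ 0$ gives ${\bf N}_{K_*}=0$ and hence the DARE; the comparison identity of Proposition~\ref{prop:analytical}(d) for uniqueness; classical theory for part (e)), it genuinely diverges on parts (b) and (d). For (b) the paper invokes the second-order condition $\nabla^2 f(K_*)\succ 0$ together with the Hessian formula of~\cite{bu2019lqr}; you instead use the exact identity $\tilde X-X_*=\sum_{j\ge 0} (A_{\tilde K}^\top)^j\Delta^\top(R+B^\top X_*B)\Delta\, A_{\tilde K}^j$ and dispose of the degenerate direction $u^\top(R+B^\top X_*B)u=0$ by observing that $f$ is then \emph{constant} along the corresponding ray, contradicting strictness. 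This is arguably the more careful route, since strict local minimality by itself only guarantees $\nabla^2 f(K_*)\succeq 0$; your argument buys positive definiteness without appealing to a second-order sufficient condition that is not literally implied by the hypothesis. For (d) the paper argues that the stationary point is unique (leaving implicit why a unique stationary point that is a local minimum must be the global one), whereas you derive $\tilde X\succeq X_*$ for \emph{every} $\tilde K\in\ca S$ directly from the same identity, obtaining global minimality and uniqueness in one stroke --- more direct and fully self-contained. For (e) both you and the paper ultimately lean on the classical maximal/almost-stabilizing solution theory of the DARE in~\cite{Lancaster1995algebraic}; your partial argument (maximality over solutions with $R+B^\top XB\succeq 0$, and the unit-circle eigenvector pairing showing that almost stabilizing solutions have no unimodular closed-loop eigenvalues) is sound as far as it goes, and you correctly identify that the remaining sign-control issue is exactly what the classical theory supplies, which is no weaker than the paper's outright citation.
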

\begin{proof}
Part $(a)$ follows from having $X_*$ solve the Lyapunov matrix equation~\eqref{eq:lyapunov_matrix} with $K = K_*$ and the fact that $Q + K^{\top} R K$ is symmetric. For parts $(b)$ and $(c)$, we first note that if $K_*$ is a strict local minimizer in $\ca S$, since $f \in C^{\omega}(\ca S)$, first-order and second-order optimality conditions imply $\nabla f(K_*) = 0$ and $\nabla^2 f(K_*) \succ 0$. By the Hessian formula in~\cite{bu2019lqr}, we have $R + B^{\top} X_* B \succ 0$, i.e., $(b)$ holds. Further, since $\nabla f(K_*) = {\bf N}_{K_*} Y_{K_*}$ and $Y_{K_*} \succ 0$, it follows that $N_{K_*} = 0$. Namely, $RK_* - B^{\top} X_* A_{K_*} = 0$. Substituting $K_* = (R+B^{\top}X_*B)^{-1}B^{\top}X_*A$ into the Lyapunov equation~\eqref{eq:lyapunov_matrix}, we have that $X_*$ solves the DARE~\eqref{eq:dare}. For part $(d)$, it suffices to observe that $K_*$ is the unique stationary point. To this end, suppose that there exist $K_{*, 1}$ and $K_{*, 2}$ such that the gradient vanishes at both points, namely ${\bf N}_{K_{*, 1}} = {\bf N}_{K_{*, 2}} = 0$\footnote{This follows from $Y_{K} \succ 0$ for every $K \in \ca S$.}. By part $(d)$ in Proposition~\ref{prop:analytical}, we have
\begin{align*}
  X_{*, 1} - X_{*, 2} &= A_{K_{*,2}}^{\top}(X_{*,1}-X_{*,2})A_{K_{*,2}} \\
  & -(K_{*,1}-K_{*,2})^{\top}(R+B^{\top}X_{*,1}B)(K_{*,1}-K_{*,2}).
  \end{align*}
  As $A_{K_{*,2}}$ is Schur, it follows that $X_{*, 1} \succeq X_{*, 2}$ and similarly $X_{*, 2} \succeq X_{*, 1}$. Hence, the stationary point is unique. Part $(e)$ follows from standard DARE theory (see Chapters $12$ and $13$ in~\cite{Lancaster1995algebraic} for details.)
  \end{proof}
\section{Global Convergence of Policy Gradient Algorithms}
\label{sec:convergence}
In this section, we show the global convergence of gradient descent (GD), natural gradient descent (NGD), and quasi-Newton (QN) iterations for indefinite least squares optimal control. In particular, under Assumption $1$, it is shown that gradient descent (respectively, natural gradient descent and quasi-Newton) converges to the maximal solution of the DARE at a linear (respectively, linear and quadratic) rate. In this direction, first recall that the gradient, natural gradient and quasi-Newton directions~\cite{bu2019lqr} are given by,
                  \begin{align*}
                     {\bf g}(K) &\coloneqq 2(R K- B^{\top} X A_{K}) Y,\\
                    {\bf n}(K) &\coloneqq 2(R K- B^{\top} X A_{K}) ,\\
                    {\bf qn}(K) &\coloneqq 2(R+B^{\top} X B)^{-1}(RK-B^{\top}XA_{K});
                    \end{align*}
                    GD, NGD and QN now refer to following update rules:
                    \begin{align}
                      \text{GD}: & &&K_{j+1} = K_j - \eta_j {\bf g}(K_j), \label{eq:gd}\\
                      \text{NGD}: & &&K_{j+1} = K_j - \eta_j {\bf n}(K_j), \label{eq:ngd}\\
                      \text{QN}: & &&K_{j+1} = K_j - \eta_j {\bf qn}(K_j),\label{eq:qn}
                      \end{align}
                      where $\eta_j$'s are stepsizes to be determined.
                  We provide the convergence analysis for the case of natural gradient descent.
                \begin{theorem}[Natural Gradient Analysis]
                  \label{thrm:ng_inner_convergence}
                 Consider the iterates $\{K_j\}$ generated by NGD~\eqref{eq:ngd}, with stepsize $\eta_j = 1/(2 \lambda_n(R+B^{\top}X_j B))$,
                    where $\{X_j\}$ solve the corresponding Lyapunov equations~\eqref{eq:lyapunov_matrix}. Then both the function values and gain iterates converge to their corresponding global minima at a linear rate. That is,
                    \begin{align*}
                       f(K_j) - f(K_*) &\le q_1^j (f(K_0)-f(K_*)),\\
                      \|K_{j} - K_*\|_F^2 &\le c_1q_1^{j}\|K_0 - K_*\|_F^2,
                      \end{align*}
                      for some $q_1 \in (0, 1)$ and $c_1 > 0$.
                  \end{theorem}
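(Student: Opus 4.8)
The plan is to prove the theorem in two stages: first establish that the NGD iterates never leave $\ca S$ (so that each $X_j$ is well defined and solves~\eqref{eq:lyapunov_matrix}), and then extract the linear rate from a one-step Lyapunov identity combined with the two-sided bound of part~($e$) in Proposition~\ref{prop:analytical}. The algebraic engine throughout is part~($d$). Writing ${\bf N}_j = RK_j - B^{\top}X_jA_{K_j}$ and noting ${\bf n}(K_j) = 2{\bf N}_j$, the NGD step reads $K_j - K_{j+1} = 2\eta_j{\bf N}_j$. Substituting $K=K_j$, $\tilde K = K_{j+1}$ into part~($d$) and collecting terms yields
\[
 X_j - X_{j+1} = A_{K_{j+1}}^{\top}(X_j - X_{j+1})A_{K_{j+1}} + 4\eta_j\,{\bf N}_j^{\top}\bigl(I - \eta_j(R+B^{\top}X_jB)\bigr){\bf N}_j .
\]
With $\eta_j = 1/\bigl(2\lambda_n(R+B^{\top}X_jB)\bigr)$ one has $\eta_j(R+B^{\top}X_jB)\preceq\tfrac12 I$, so the source term is positive semidefinite; hence, \emph{provided} $A_{K_{j+1}}$ is Schur, $X_j - X_{j+1}\succeq 2\eta_j{\bf N}_j^{\top}{\bf N}_j\succeq 0$.

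The main obstacle is exactly that proviso: unlike the definite case, $f$ need not diverge on $\partial\ca S$, so invariance of $\ca S$ cannot be argued by coercivity. I would instead use a homotopy/continuity contradiction. Fix $K_j\in\ca S$ and set $K(t) = K_j - 2t{\bf N}_j$, so $K(0)=K_j$ and $K(\eta_j)=K_{j+1}$; let $T=\sup\{s:K(t)\in\ca S\text{ for }t\in[0,s)\}>0$. The displayed identity with $t$ in place of $\eta_j$ gives $X(t)\preceq X_j$ on $[0,T)$, and substituting $K=K_*$ (where ${\bf N}_{K_*}=0$) into part~($d$) gives $X(t)\succeq X_*$; thus $X(t)$ stays between $X_*$ and $X_j$ in the semidefinite order and is bounded. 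If $T\le\eta_j$, then as $t\to T^-$ a subsequence of $X(t)$ converges to a symmetric $X_T$ solving the Lyapunov equation at $K(T)\in\partial\ca S$. Applying part~($d$) with $K=K_*$, $\tilde K=K(T)$ gives
\[
 X_T - X_* = A_{K(T)}^{\top}(X_T - X_*)A_{K(T)} + (K(T)-K_*)^{\top}(R+B^{\top}X_*B)(K(T)-K_*).
\]
Taking a unit-modulus eigenvalue $\mu$ of $A_{K(T)}$ with eigenvector $v$ and forming $v^{*}(\cdot)v$, the two $A_{K(T)}$-terms cancel since $|\mu|=1$, forcing $(R+B^{\top}X_*B)^{1/2}(K(T)-K_*)v=0$; as $R+B^{\top}X_*B\succ0$ at the minimizer, $(K(T)-K_*)v=0$ and therefore $A_{K(T)}v = A_{K_*}v = \mu v$, contradicting $\rho(A_{K_*})<1$. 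Hence $T>\eta_j$ and $K_{j+1}\in\ca S$; induction keeps the whole trajectory in $\ca S$. This is the distinct stability argument advertised in the introduction, and it applies verbatim under the standard definite assumptions.

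With invariance secured, the one-step identity gives $X_{j+1}\preceq X_j$ while the $K_*$-identity gives $X_j\succeq X_*$, so $X_*\preceq X_j\preceq X_0$ for all $j$. This sandwich yields uniform constants: $\lambda_n(R+B^{\top}X_jB)\le\lambda_n(R+B^{\top}X_0B)$ bounds $\eta_j\ge\underline\eta>0$, while $R+B^{\top}X_jB\succeq R+B^{\top}X_*B\succ0$ and $Y_j\succeq{\bf\Sigma}$ bound $\tau_1,\tau_2$ of part~($e$) uniformly. Combining the sufficient-decrease estimate $f(K_j)-f(K_{j+1})=\Tr\bigl((X_j-X_{j+1}){\bf\Sigma}\bigr)\ge 2\eta_j\lambda_1({\bf\Sigma})\|{\bf N}_j\|_F^2$ with the gradient-domination upper bound $f(K_j)-f(K_*)\le\tau_2\langle{\bf N}_j,{\bf N}_j\rangle$ of part~($e$) gives $f(K_{j+1})-f(K_*)\le q_1\bigl(f(K_j)-f(K_*)\bigr)$ with $q_1 = 1 - 2\underline\eta\,\lambda_1({\bf\Sigma})\lambda_1(R+B^{\top}X_*B)/\lambda_n(Y_*)\in(0,1)$, which proves the first inequality by iteration.

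Finally, for the gain bound I would sandwich the suboptimality by distances at both ends. The lower estimate $\tau_1\|K_j-K_*\|_F^2\le f(K_j)-f(K_*)$ of part~($e$), with $\tau_1\ge\underline\tau_1 := \lambda_1({\bf\Sigma})\lambda_1(R+B^{\top}X_*B)>0$, gives $\|K_j-K_*\|_F^2\le\underline\tau_1^{-1}q_1^{j}\bigl(f(K_0)-f(K_*)\bigr)$. For an upper estimate at $K_0$, the $K_*$-identity solved as a Neumann series yields $f(K_0)-f(K_*)=\Tr\bigl((K_0-K_*)^{\top}(R+B^{\top}X_*B)(K_0-K_*)Y_0\bigr)\le\lambda_n(R+B^{\top}X_*B)\lambda_n(Y_0)\|K_0-K_*\|_F^2$. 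Combining the two gives $\|K_j-K_*\|_F^2\le c_1q_1^{j}\|K_0-K_*\|_F^2$ with $c_1=\lambda_n(R+B^{\top}X_*B)\lambda_n(Y_0)/\underline\tau_1$, as claimed. I expect the eigenvector cancellation in the stability step to be the crux; everything after the order sandwich $X_*\preceq X_j\preceq X_0$ is routine bookkeeping.
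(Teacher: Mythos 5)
Your proposal is correct and follows essentially the same route as the paper: the stability of the iterates is established by exactly the paper's argument (the ray $K(t)=K_j-2t{\bf N}_j$, the sandwich $X_*\preceq X(t)\preceq X_j$ giving a convergent subsequence at the putative marginal gain, and the unit-modulus eigenvector cancellation via part~($d$) contradicting $\rho(A_{K_*})<1$), after which the linear rate follows from the one-step decrease and part~($e$). The only difference is cosmetic: you write out the one-step Lyapunov identity and the resulting contraction factor explicitly, whereas the paper delegates that bookkeeping to the cited prior work.
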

                  %
                  \begin{proof}
                    The analysis provided in~\cite{bu2019lqr} for the one-step progression of NGD holds here; thus the convergence rate would remain the same if we can prove that the iterates remain stabilizing. \par
                    By induction, it suffices to argue that with the chosen stepsize, $K_j$ is stabilizing provided that $K_{j-1}$ is. Consider the ray $\{K_t = K_{j-1} -  t {\bf n}(K_{j-1}): t \ge 0\}$. Note that by openness of $\ca S$ and continuity of eigenvalues, there is a maximal interval $[0, \zeta)$\footnote{We suppose $\zeta$ is finite; if $\zeta$ is infinite, there is nothing needed to be shown.} such that $K_{j-1} + t {\bf n}(K_{j-1}) $ is stabilizing for $t \in [0, \zeta)$ and $K_{j-1} + \zeta {\bf n}(K_{j-1})$ is marginally stabilizing. Now suppose that $\zeta \le {1}/({2\lambda_n(R_1+B_1^{\top} X_{i-1} B_1)})$; take a sequence $t_l \in [0, \zeta)$ such that $t_l \to \zeta$. Consider the sequence of value matrices $\{X_{t_l}\}$ and denote by $\ca L$ as the set of all limit points of $\{X_{t_j}\}$. Observe that $X_* \preceq X_{t_l} \preceq X_{j-1}$. By Bolzno-Weierstrass~\cite{rudin1976principles}, $\ca L$ is nonempty.\footnote{Note that it is not guaranteed that $X_{t_j}$ is convergent. The limit points are also not necessarily well-ordered in the ordering induced by the p.s.d. cone.} By continuity, any $Z \in \ca L$ solves,
                    \begin{align*}
                      Z=(A-B K_{\zeta})^{\top} Z  (A-B K_{\zeta})  + Q  + K_{\zeta}^{\top} R K_{\zeta}.
                    \end{align*}
                    Now by part $(d)$ in Proposition~\ref{prop:analytical}, we have
                    \begin{align*}
                      Z-X_* &= (A-B K_{\zeta})^{\top} (Z-X_*) (A-B K_{\zeta}) \\
                      &\quad + (K_{\zeta}-K_*)^{\top} (R + B^{\top} X_* B)(K_{\zeta}-K_*) .
                      \end{align*}
                      Suppose that $(\lambda, v)$ is the eigenvalue-eigenvector pair of $A-B K_{\zeta}$ such that $(A-B K_{\zeta}) v = \lambda v$ and $|\lambda| = 1$. Then it follows that,
                      \begin{align*}
                      v^{\top}(Z-X_*)v &= v^{\top}(A-B K_{\zeta})^{\top} (Z-X_*) (A-B K_{\zeta})v \\
                      &+ v^{\top}(K_{\zeta}-K_*)^{\top} (R + B^{\top} X_* B)(K_{\zeta}-K_*)v .
                        \end{align*}
                       Thereby $(K_{\zeta} -K_*)v = 0$ and $K_{\zeta} v = K_* v$. Consequently, $(A-BK_*) v = (A-BK_{\zeta}) v$. But this is a contradiction to the assumption that $K_*$ is a stabilizing solution.
                       
                       Hence $\{X_j\}$ is a monotonically non-increasing sequence bounded below by $X_{*}$. As such, the sequence of iterates $\{K_j\}$ and the sequence of function values $\{f(K_j)\}$ converge linearly to $K_*$ and $f(K_*)$ following the arguments in~\cite{bu2019lqr}.
                    \end{proof}
                    We mention that the above stability argument can be applied for the sequence generated by the quasi-Newton iteration as well. The quadratic convergence rate for such a sequence would then follow from the proof in~\cite{bu2019lqr}.
                    \begin{theorem}[Quasi-Newton Analysis]
                  \label{thrm:qn_inner_convergence}
                  Suppose Assumption $1$ holds. Consider the iterates $\{K_j\}$ generated by QN~\eqref{eq:qn} with stepsize $\eta_j = 1/2$.
                    Then both the function values and iterates converge to their respective global minima at a $Q$-quadratic rate. That is,
                    \begin{align*}
                       f(K_j) - f(K_*) &\le q_2 (f(K_{j-1})-f(K_*))^2,\\
                      \|K_{j} - K_*\|_F^2 &\le c_2q_2 \|K_{j-1} - K_*\|_F^4,
                      \end{align*}
                      for some $q_2 > 0$ and $c_2 > 0$.
                  \end{theorem}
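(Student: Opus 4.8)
The plan is to begin by recognizing that, with the prescribed stepsize $\eta_j = 1/2$, the quasi-Newton recursion~\eqref{eq:qn} collapses to the classical Kleinman--Newton iteration for the DARE. Writing ${\bf N}_j = (R+B^{\top}X_j B)K_j - B^{\top}X_j A$ and substituting into~\eqref{eq:qn} gives
\begin{align*}
  K_{j+1} = K_j - (R+B^{\top}X_j B)^{-1}{\bf N}_j = (R+B^{\top}X_j B)^{-1}B^{\top}X_j A,
\end{align*}
which is exactly one Newton step for~\eqref{eq:dare}. I would then establish that the iterates remain stabilizing by reusing, essentially verbatim, the argument of Theorem~\ref{thrm:ng_inner_convergence}: walk along the ray $K_t = K_j - t\,{\bf qn}(K_j)$, invoke the comparison $X_* \preceq X_t$ together with Bolzano--Weierstrass to extract a limiting value matrix at a hypothetical marginally stabilizing endpoint, and contradict the Schur-ness of $A_{K_*}$ through part~$(d)$ of Proposition~\ref{prop:analytical}. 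This simultaneously yields the monotone sandwich $X_* \preceq X_{j+1} \preceq X_j \preceq X_0$, confining all iterates to the compact order interval $[X_*,X_0]$.

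The core of the argument will be a quadratic contraction of the value matrices. Applying part~$(d)$ of Proposition~\ref{prop:analytical} with $K = K_*$ and $\tilde{K} = K_{j+1}$, and using ${\bf N}_{K_*} = 0$, produces the Lyapunov identity
\begin{align*}
  X_{j+1} - X_* = A_{K_{j+1}}^{\top}(X_{j+1}-X_*)A_{K_{j+1}} + (K_{j+1}-K_*)^{\top}(R+B^{\top}X_* B)(K_{j+1}-K_*).
\end{align*}
Since $A_{K_{j+1}}$ is Schur and $R+B^{\top}X_* B \succ 0$, solving this equation confirms $X_{j+1}-X_* \succeq 0$ and, crucially, delivers an upper estimate of the form $\|X_{j+1}-X_*\| \le C\,\|K_{j+1}-K_*\|_F^2$, with $C$ controlled by $\sup_j \rho(A_{K_{j+1}})$ and $\|R+B^{\top}X_* B\|$. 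Separately, $K_{j+1} = (R+B^{\top}X_j B)^{-1}B^{\top}X_j A$ is a real-analytic function of $X_j$ equal to $K_*$ at $X_j = X_*$, so a Lipschitz estimate gives $\|K_{j+1}-K_*\|_F \le L\,\|X_j - X_*\|$. Chaining the two bounds yields the Newton behavior $\|X_{j+1}-X_*\| \le C L^2\,\|X_j-X_*\|^2$.

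To finish, I would transport this matrix estimate to the two stated quantities. Because $f(K) = \Tr(X{\bf \Sigma})$ with ${\bf \Sigma} \succ 0$ and $X_j - X_* \succeq 0$, one has the two-sided comparison $\lambda_1({\bf \Sigma})\,\|X_j-X_*\| \le f(K_j)-f(K_*) \le n\,\lambda_n({\bf \Sigma})\,\|X_j-X_*\|$, which converts the quadratic contraction in $\|X_j-X_*\|$ into $f(K_j)-f(K_*) \le q_2\,(f(K_{j-1})-f(K_*))^2$ with $q_2 = n\lambda_n({\bf \Sigma})CL^2/\lambda_1({\bf \Sigma})^2$. For the gains, the lower bound $\tau_1\|K_j-K_*\|_F^2 \le f(K_j)-f(K_*)$ from part~$(e)$ of Proposition~\ref{prop:analytical}, combined with a local Taylor bound $f(K_{j-1})-f(K_*) \le M\|K_{j-1}-K_*\|_F^2$ (valid since $\nabla f(K_*)=0$ and $f \in C^{\omega}(\ca S)$), gives $\|K_j-K_*\|_F^2 \le c_2 q_2 \|K_{j-1}-K_*\|_F^4$ with $c_2 = M^2/\tau_1$. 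The step I expect to be the main obstacle is ensuring \emph{uniformity} of the constants $C$, $L$, and $M$ across all iterations rather than merely in the tail; this is precisely where the monotone sandwich and the convergence $K_j \to K_*$ are indispensable, since together they force $\sup_j \rho(A_{K_j}) < 1$, a uniform bound on $(R+B^{\top}X_j B)^{-1}$ via $R+B^{\top}X_j B \succeq R+B^{\top}X_* B \succ 0$, and eventual residence of the iterates in any prescribed neighborhood of $K_*$.
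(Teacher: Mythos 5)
Your proposal is correct and follows essentially the same route as the paper: the paper's (very terse) proof likewise establishes stabilization of the QN iterates by reusing the ray/limit-point argument from Theorem~\ref{thrm:ng_inner_convergence} and then defers the $Q$-quadratic rate to the Hewer/Newton-type analysis in the cited reference, which is precisely the Lyapunov-identity and Lipschitz chaining you spell out. The only point to watch is that bounding $\|X_{j+1}-X_*\|$ from the Lyapunov identity requires a uniform bound on $\sum_k \|A_{K_{j+1}}^k\|_2^2$, not merely on $\sup_j \rho(A_{K_{j+1}})$, but you correctly identify the monotone sandwich $X_* \preceq X_{j+1} \preceq X_0$ as the mechanism for securing uniform constants.
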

                  The gradient policy analysis requires more work since the stepsize developed in~\cite{bu2019lqr} involves the smallest eigenvalue $\lambda_1(Q)$. However by carefully replacing ``$\lambda_1(Q)$-related quantities'' in~\cite{bu2019lqr}, one can still prove the global linear convergence rate as follows.
                    \begin{theorem}[Gradient Analysis]
                  \label{thrm:gd_inner_convergence}
                  Suppose Assumption $1$ holds. Consider the iterate $\{K_j\}$ generated by GD~\eqref{eq:gd} with stepsize $\eta_j$ {specified in Theorem~\ref{thrm:gd_step}.}
                     Then both the function values and iterates converge to their respective global minima at a linear rate. That is,
                    \begin{align*}
                       f(K_j) - f(K_*) &\le q_3^j (f(K_0)-f(K_*)),\\
                      \|K_{j} - K_*\|_F^2 &\le c_3q_3^{j}\|K_0 - K_*\|_F^2,
                      \end{align*}
                      for some $q_3 \in (0, 1)$ and $c_3 > 0$.
                  \end{theorem}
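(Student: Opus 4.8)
The plan is to mirror the template already established in the proof of Theorem~\ref{thrm:ng_inner_convergence}: I would decouple the argument into (i) a one-step progression estimate yielding the linear contraction of $f(K_j)-f(K_*)$, inherited essentially verbatim from~\cite{bu2019lqr} once the stepsize of Theorem~\ref{thrm:gd_step} is in force, and (ii) a stability certificate guaranteeing $K_j \in \ca S$ whenever $K_{j-1} \in \ca S$. The only genuinely new ingredient relative to~\cite{bu2019lqr} is that the gradient stepsize there is calibrated through $\lambda_1(Q)$, which is no longer a usable lower bound once $Q$ is indefinite. The remedy is to observe that the quantity controlling coercivity is not $Q$ itself but $R+B^{\top} X_j B$: by the monotonicity $X_* \preceq X_j$ established exactly as in the NGD proof, together with $R+B^{\top} X_* B \succ 0$ (a consequence of Assumption~\asref{assump1}), one has $R+B^{\top} X_j B \succeq R+B^{\top} X_* B \succ 0$ uniformly along the iterates. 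Substituting $\lambda_1(R+B^{\top} X_j B)$ and $\lambda_n(Y_j)$ for the ``$\lambda_1(Q)$-related quantities'' reproduces the gradient dominance and smoothness estimates of Proposition~\ref{prop:analytical}$(e)$, and hence the contraction factor $q_3 \in (0,1)$.

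For the stability certificate I would reuse the ray/contradiction device of Theorem~\ref{thrm:ng_inner_convergence}, now along $\{K_t = K_{j-1} - t\,{\bf g}(K_{j-1}) : t \ge 0\}$. Assuming by induction that $K_{j-1}\in\ca S$ and letting $[0,\zeta)$ be the maximal interval of stabilizing gains, I would suppose for contradiction that $\zeta \le \eta_j$ with $K_\zeta$ marginally stabilizing. The stepsize of Theorem~\ref{thrm:gd_step} must be small enough that the value matrices stay monotone and bounded, $X_* \preceq X_t \preceq X_{j-1}$ for $t \in [0,\zeta)$; Bolzano--Weierstrass then furnishes a limit point $Z$ of $\{X_{t_l}\}$ as $t_l \uparrow \zeta$, solving the Lyapunov equation at $K_\zeta$. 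Applying part $(d)$ of Proposition~\ref{prop:analytical} to the pair $(K_\zeta, K_*)$ and testing the resulting identity against a unit-modulus eigenvector $v$ of $A-BK_\zeta$ forces $(K_\zeta - K_*)v = 0$, whence $(A-BK_*)v = (A-BK_\zeta)v = \lambda v$ with $|\lambda|=1$---contradicting that $A-BK_*$ is Schur. This precludes $\zeta \le \eta_j$, so $K_j = K_{\eta_j} \in \ca S$, closing the induction.

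The main obstacle I anticipate lies in the first half: verifying that the $\lambda_1(Q)$-free stepsize of Theorem~\ref{thrm:gd_step} simultaneously (a) preserves the monotone sandwich $X_* \preceq X_t \preceq X_{j-1}$ along the \emph{entire} ray, and (b) still delivers a strict one-step decrease of $f$. Unlike the natural gradient direction ${\bf n}(K)$, the gradient direction carries the extra right factor $Y_{K_{j-1}}$, so the admissible stepsize and the per-step descent cannot be read off directly from the NGD computation but must be re-derived in terms of $\lambda_n(Y_{j-1})$ and $\lambda_n(R+B^{\top} X_{j-1} B)$; reconciling the stepsize required for the monotone sandwich with the one required for the contraction is the delicate bookkeeping step. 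Once the stepsize of Theorem~\ref{thrm:gd_step} is shown to meet both requirements, the linear rate for the gain iterates follows from the function-value contraction through the two-sided bound of Proposition~\ref{prop:analytical}$(e)$, exactly as in~\cite{bu2019lqr}.
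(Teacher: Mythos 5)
Your high-level architecture matches the paper's: a one-step descent estimate inherited from~\cite{bu2019lqr} plus an inductive stability certificate along the ray $\{K_{j-1} - t\,{\bf g}(K_{j-1})\}$, closed by Bolzano--Weierstrass, part $(d)$ of Proposition~\ref{prop:analytical}, and the unit-modulus eigenvector test. However, there are two genuine gaps.

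First, you invoke the ``monotone sandwich'' $X_* \preceq X_t \preceq X_{j-1}$ along the gradient ray, imported from the NGD proof. The upper half of that sandwich is not available for the gradient direction. Writing $K - K_t = 2t\,{\bf N}Y$ in the identity of Proposition~\ref{prop:analytical}$(d)$ produces the cross term $2t\,(Y{\bf N}^{\top}{\bf N} + {\bf N}^{\top}{\bf N}Y)$, and a symmetrized product of two positive semidefinite matrices need not be positive semidefinite; so $X_t \preceq X_{j-1}$ in the Loewner order does not follow. The paper deliberately avoids this: it only establishes the \emph{trace} inequality $\Tr(X_{t}{\bf \Sigma}) \le \Tr(X{\bf \Sigma})$ (via $\phi(t)\ge 0$) and combines it with the Loewner \emph{lower} bound $X_{t} \succeq X_*$ to conclude that $\{X_{t_i}\}$ is bounded --- the appendix even contains a parenthetical warning that the function-value inequality alone does not give boundedness and that $X_{t_i}\succeq X_*$ is essential. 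Your argument as stated rests on an inequality that fails.

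Second, the step you defer as ``delicate bookkeeping'' --- producing a computable, $\lambda_1(Q)$-free stepsize that keeps $\phi(\eta)\ge 0$ --- is the actual content of the appendix, and it is not resolved by substituting $\lambda_1(R+B^{\top}X_jB)$ and $\lambda_n(Y_j)$ into the estimates of~\cite{bu2019lqr}. The difficulty is that $\phi(\eta)$ involves $Y_\eta$, and in~\cite{bu2019lqr} this is controlled via compactness of sublevel sets, which requires $Q\succ 0$. The paper's replacement is a dedicated perturbation bound (its Proposition A.1): subtracting the two Lyapunov equations for $Y_\eta$ and $Y$ yields $\|Y_\eta\|_2 \le \beta_0\|Y\|_2$ for all $\eta \le \eta_0$, with $\eta_0,\beta_0$ explicit in $\|Y\|_2$, $\|B{\bf N}Y\|_2$, $\|A_K\|_2$ and $\lambda_1({\bf \Sigma})$; this bound is what makes the constants $a_1, a_2, c_0$ and hence the stepsize of Theorem~\ref{thrm:gd_step} computable. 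Without some such uniform control of $Y_\eta$ along the ray, your plan does not yield a stepsize, and without a correct boundedness argument for $\{X_{t_i}\}$ your stability certificate does not close.
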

                  In~\cite{bu2019lqr}, the compactness of sublevel sets have been used to devise the stepsize rule to guarantee a sufficient decrease in the cost and stability of the iterates. The proof of compactness in~\cite{bu2019lqr} however, relies on the positive definiteness of $Q$ and $R$.\footnote{Or the observability of $(Q, A)$.} But, we can show that a perturbation bound can be employed to derive a suitable constant stepsize for the indefinite cost structure as well. The details of this observation are deferred to the {Appendix~\ref{sec:gd_inner}.}
                  \section{A Numerical Example}
                  \label{sec:numerical}
                  In this section, we show the proposed convergence results by a numerical example. The system parameters are $A=0.5 I$, $B=I$, $R=I$ and
                  \begin{equation*}
                  \resizebox{\linewidth}{!}{
                    $Q = \begin{pmatrix} 1.62370842 &  0.36712592 & -1.31209102 &  1.97803823 & -0.49297266 \\
        0.36712592 &  2.21878741 & 0.47525552 & -1.07142839 & 1.04343275 \\
       -1.31209102 &  0.47525552 & 1.90887732 &-0.83057818 & 0.3818043 \\
        1.97803823 & -1.07142839 &-0.83057818 & 0.93847322 &-0.90779531\\
       -0.49297266 &  1.04343275 & 0.3818043  &-0.90779531 & -1.06295748\\
                      \end{pmatrix}.$}
                    \end{equation*}
                    Note that $Q$ is indefinite and its (rounded) eigenvalues are $4.75, 2.55, 0.96, -1.1, -1.53$.
                    Figures~\ref{fig:fig1}-\ref{fig:fig2} show the global linear convergence of the gradient policy update. The global linear convergence of natural gradient policy are demonstrated in Figures~\ref{fig:fig1_ngd}-\ref{fig:fig2_ngd}. Figures~\ref{fig:fig1_qn}-\ref{fig:fig2_qn} show the $Q$-quadratic convergence for the quasi-Newton policy update.
\begin{figure}[ht]
    \centering
    \begin{minipage}{0.48\linewidth}
        \centering
     \includegraphics[width=0.98\textwidth]{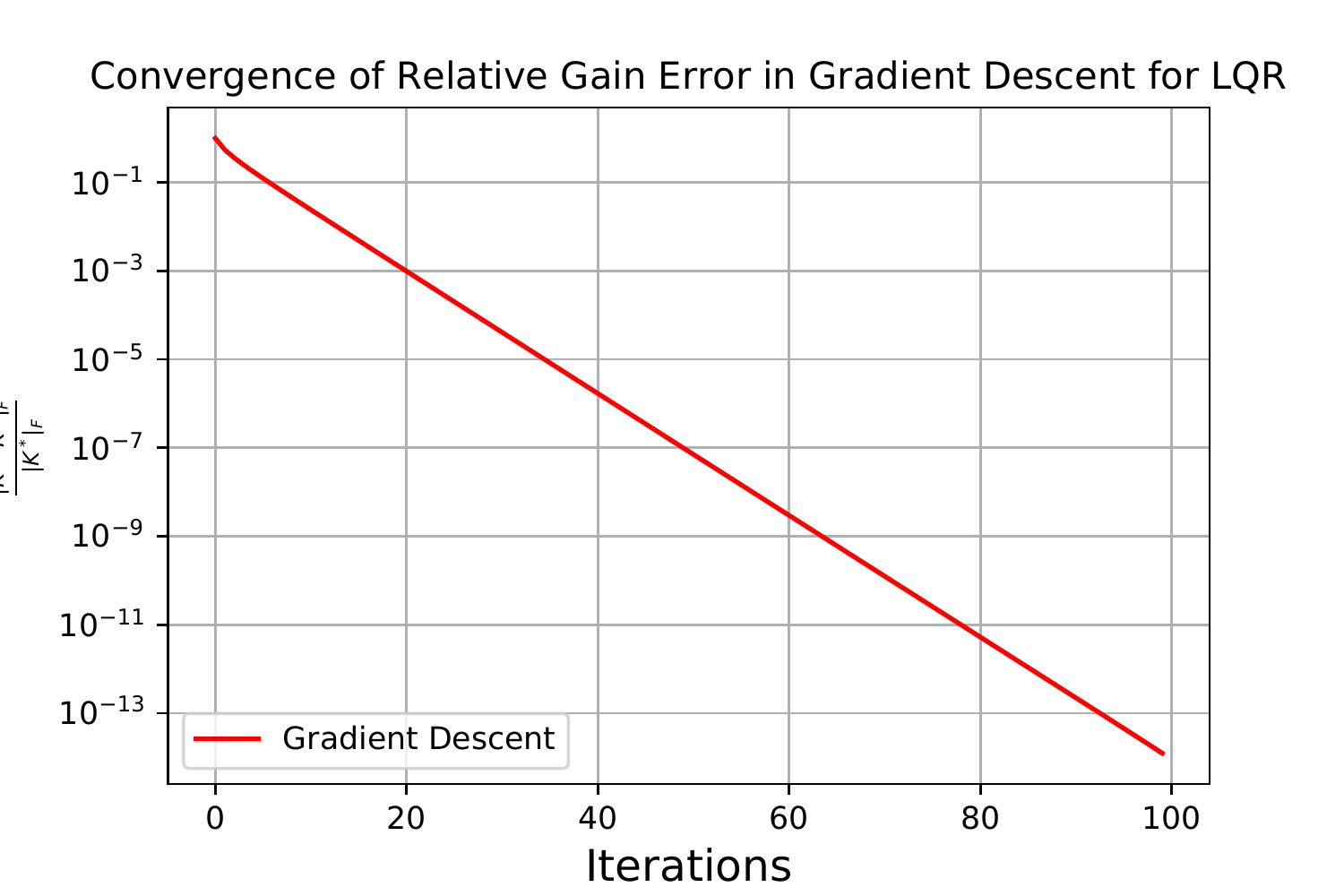}
      \caption{\footnotesize Convergence of the relative error for the feedback gain under gradient descent iteration}\label{fig:fig1}
    \end{minipage}\hfill
    \begin{minipage}{0.48\linewidth}
        \centering
        \includegraphics[width=0.98\textwidth]{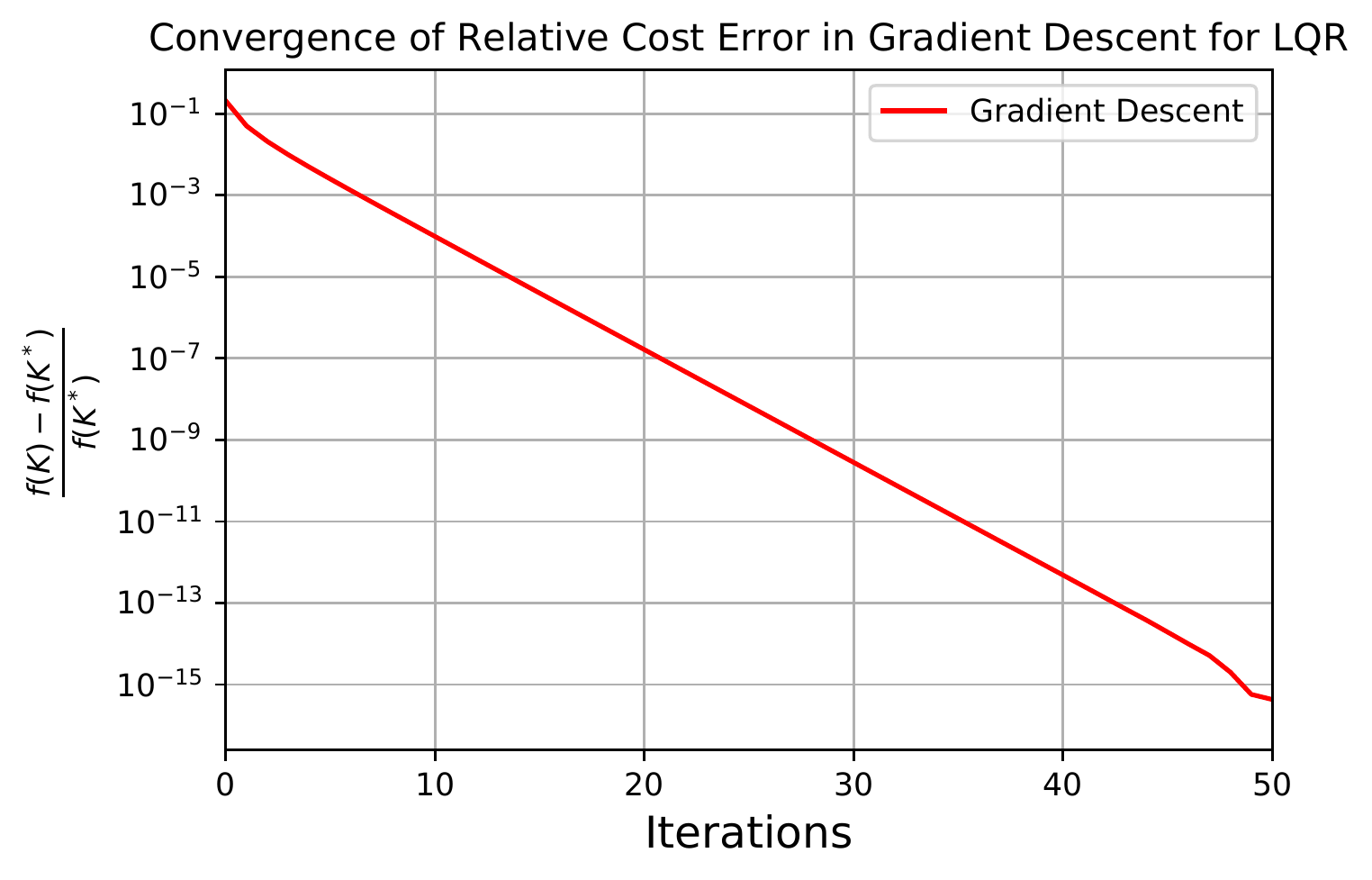}        
        \caption{\footnotesize Convergence of the relative error for indefinite LQR cost under gradient descent iteration}\label{fig:fig2}
    \end{minipage}
\end{figure}
                    \begin{figure}[ht]
    \centering
    \begin{minipage}{0.48\linewidth}
        \centering
     \includegraphics[width=0.98\textwidth]{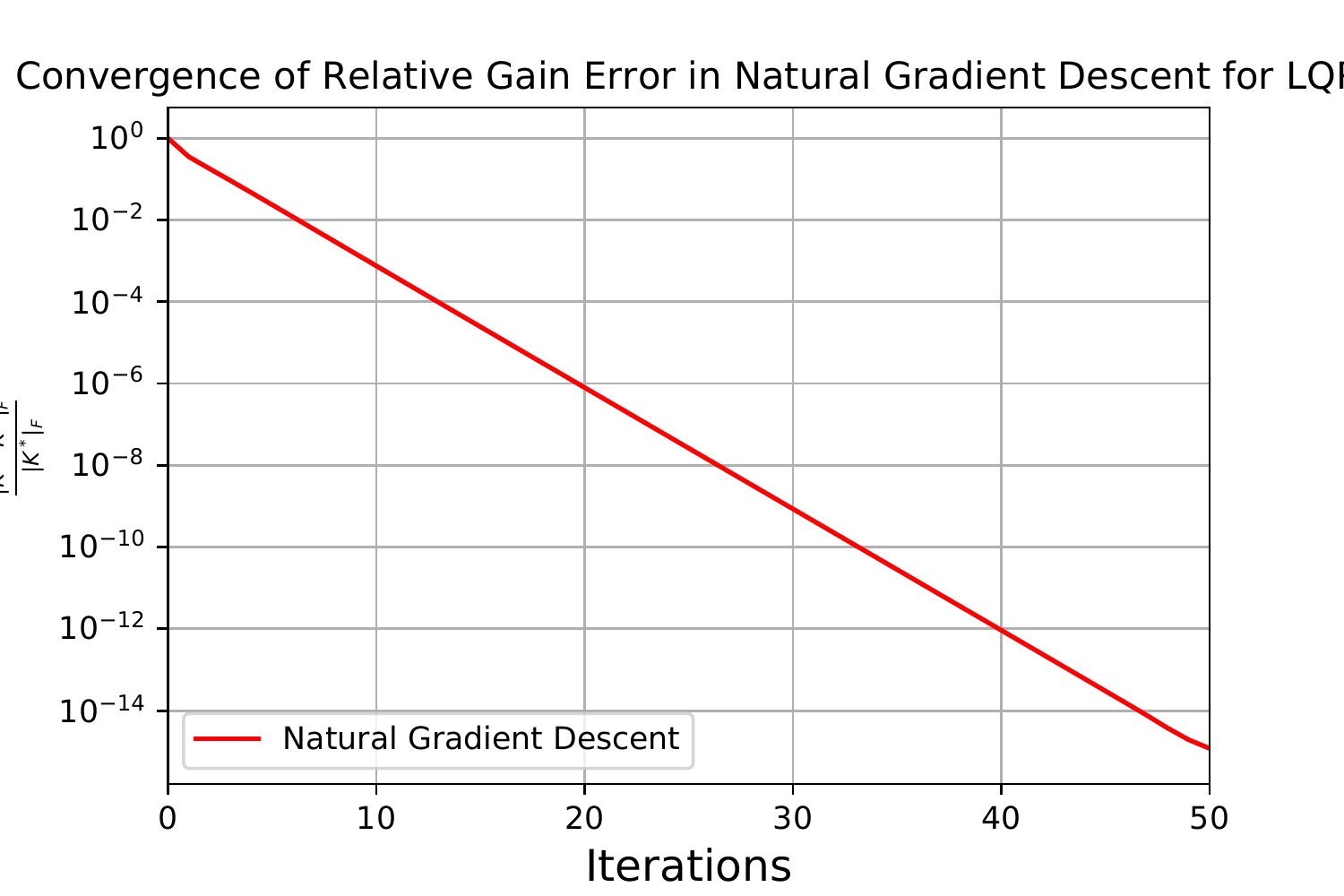}
      \caption{\footnotesize Convergence of the relative error for the feedback gain under natural gradient descent iteration}\label{fig:fig1_ngd}
    \end{minipage}\hfill
    \begin{minipage}{0.48\linewidth}
        \centering
        \includegraphics[width=0.98\textwidth]{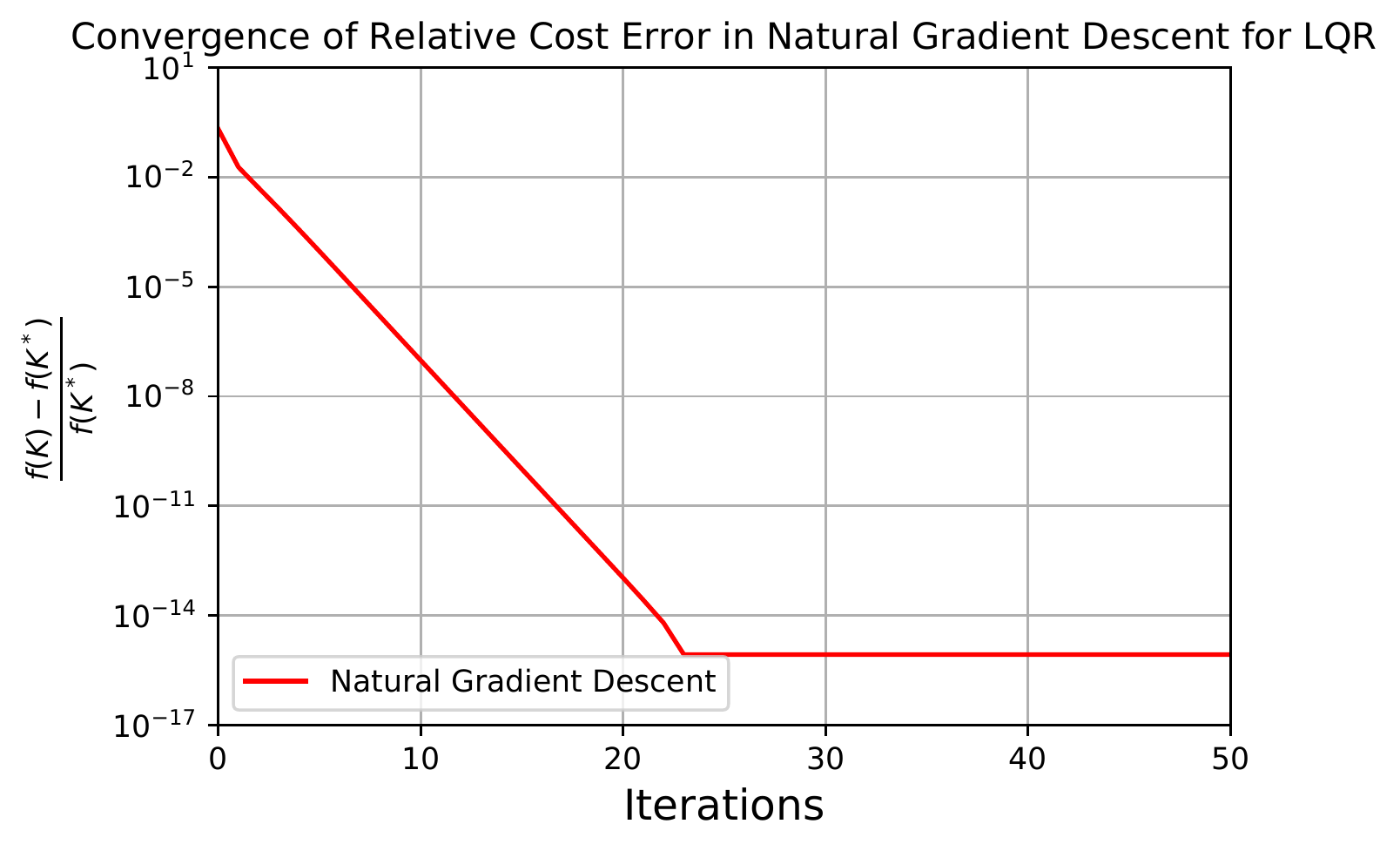}        
        \caption{\footnotesize Convergence of the relative error for indefinite LQR cost under natural gradient descent iteration}\label{fig:fig2_ngd}
    \end{minipage}
\end{figure}
\begin{figure}[ht]
    \centering
    \begin{minipage}{0.48\linewidth}
        \centering
     \includegraphics[width=0.98\textwidth]{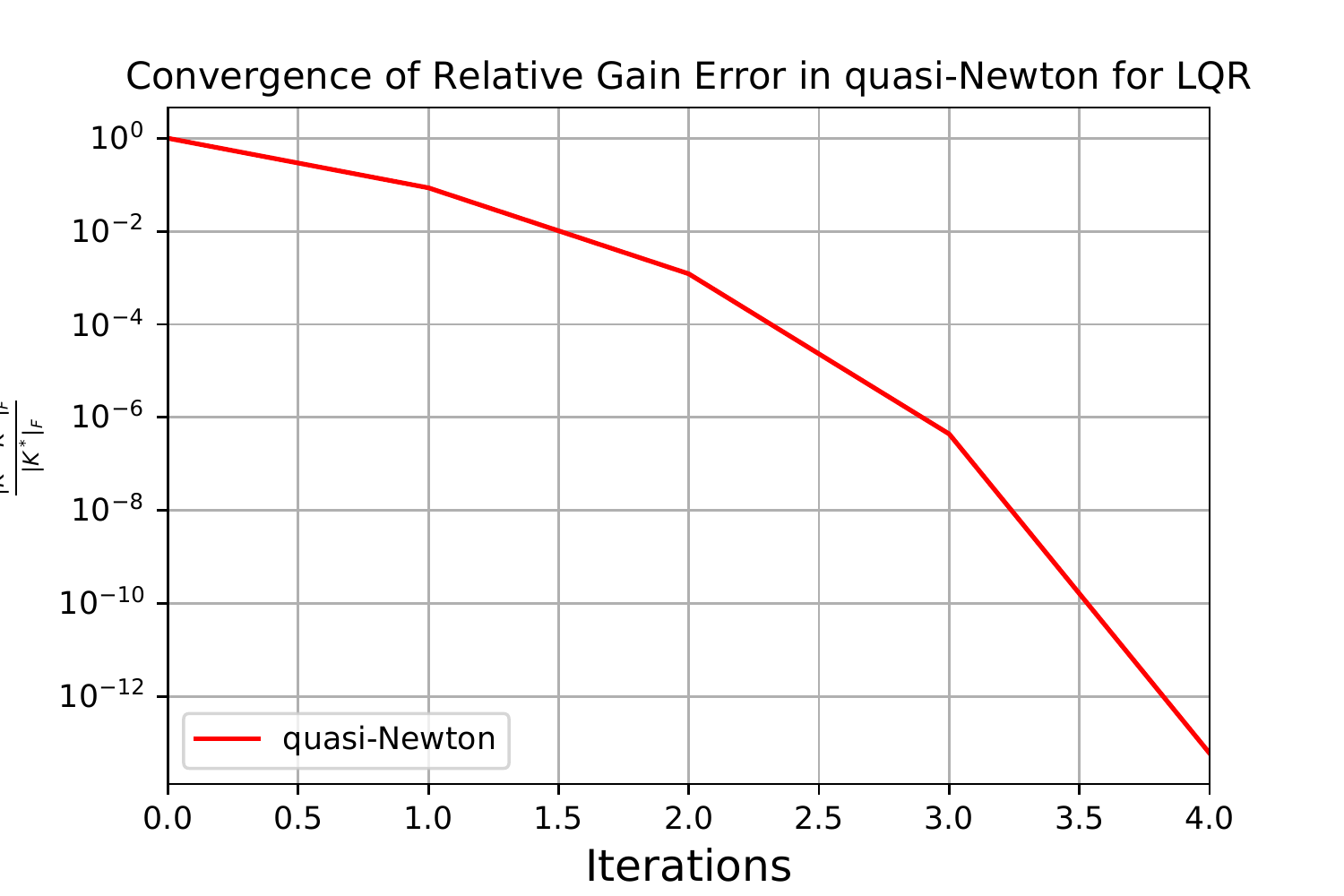}
      \caption{\footnotesize Convergence of the relative error for the feedback gain under quasi-Newton iteration with constant stepsize $1/2$ } \label{fig:fig1_qn}
    \end{minipage}\hfill
    \begin{minipage}{0.48\linewidth}
        \centering
        \includegraphics[width=0.98\textwidth]{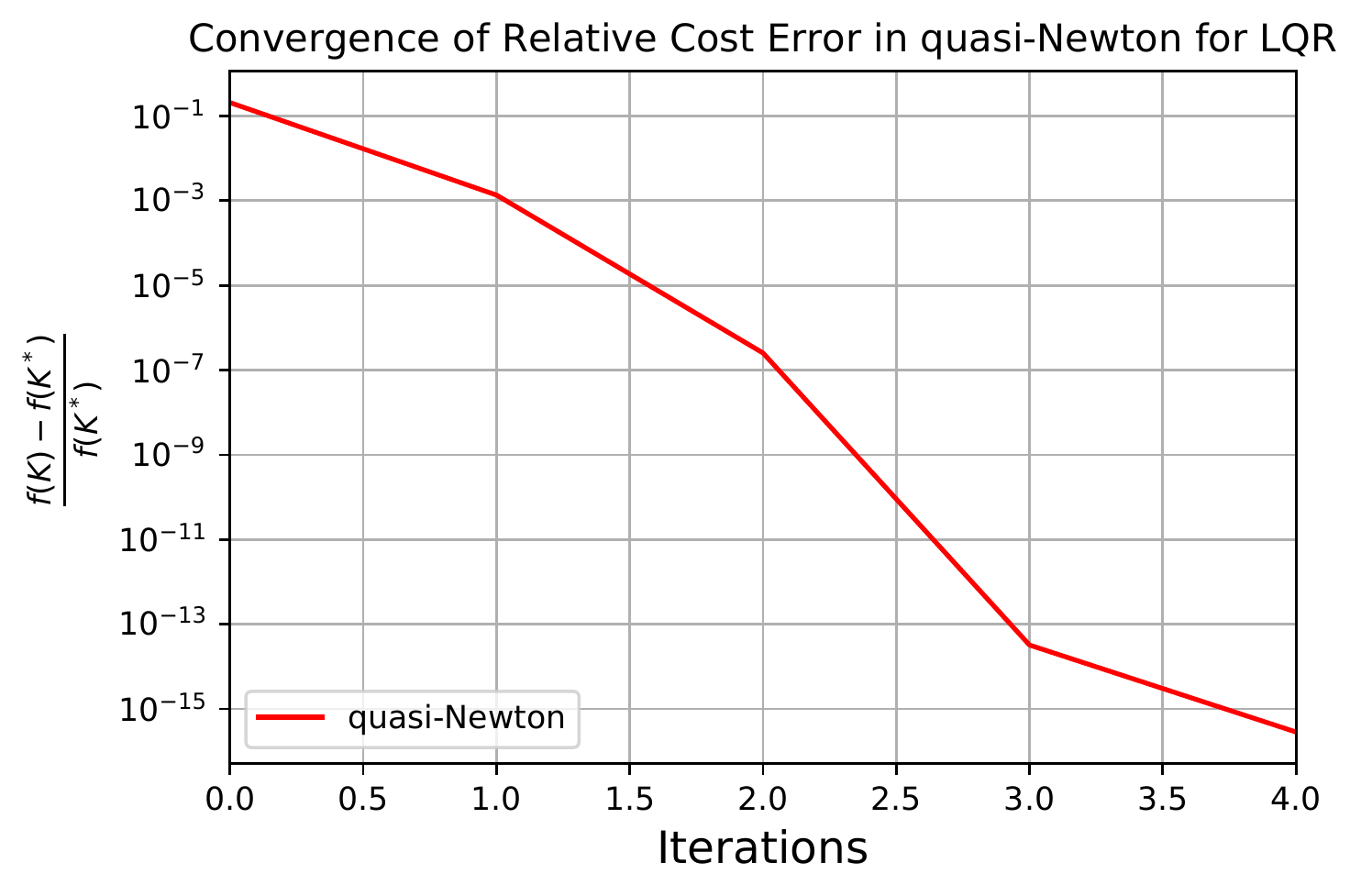}        
         \caption{\footnotesize Convergence of the relative error for indefinite LQR cost under quasi-Newton iteration with constant stepsize $1/2$}
        \label{fig:fig2_qn}
    \end{minipage}
\end{figure}

                  \section{Concluding Remarks}
                  \label{sec:conclusion}
                  This note considers policy gradient algorithms for the indefinite least squares stationary optimal control, e.g., indefinite LQR. We show the global linear (respectively, linear and $Q$-quadratic) convergence of gradient policy (respectively, natural gradient and quasi-Newton policies.) Although these results are presented assuming the knowledge of the system matrices,  gradient and natural gradient policies can be extended to model-free case by means of stochastic (zeroth order) optimization (see \cite{fazel2018global} for details). As such, this note extends the results reported in~\cite{fazel2018global, bu2019lqr} for indefinite LQR. These extensions have important implications for  optimal control, stability analysis and LQ games. Indeed, some of these observations have been utilized to show global convergence of sequential policy updates in LQ dynamic games~\cite{bu2019global}.
                  \section*{Acknowledgements}
                  The authors thank Henk J. van Waarde for many helpful discussions.
\bibliographystyle{ieeetran}
\bibliography{ref}

\begin{thebibliography}{10}
\providecommand{\url}[1]{#1}
\csname url@samestyle\endcsname
\providecommand{\newblock}{\relax}
\providecommand{\bibinfo}[2]{#2}
\providecommand{\BIBentrySTDinterwordspacing}{\spaceskip=0pt\relax}
\providecommand{\BIBentryALTinterwordstretchfactor}{4}
\providecommand{\BIBentryALTinterwordspacing}{\spaceskip=\fontdimen2\font plus
\BIBentryALTinterwordstretchfactor\fontdimen3\font minus
  \fontdimen4\font\relax}
\providecommand{\BIBforeignlanguage}[2]{{%
\expandafter\ifx\csname l@#1\endcsname\relax
\typeout{** WARNING: IEEEtran.bst: No hyphenation pattern has been}%
\typeout{** loaded for the language `#1'. Using the pattern for}%
\typeout{** the default language instead.}%
\else
\language=\csname l@#1\endcsname
\fi
#2}}
\providecommand{\BIBdecl}{\relax}
\BIBdecl

\bibitem{kalman1960contributions}
R.~E. Kalman \emph{et~al.}, ``Contributions to the theory of optimal control,''
  \emph{Bol. soc. mat. mexicana}, vol.~5, no.~2, pp. 102--119, 1960.

\bibitem{willems1971least}
J.~Willems, ``Least squares stationary optimal control and the algebraic
  riccati equation,'' \emph{IEEE Transactions on Automatic Control}, vol.~16,
  no.~6, pp. 621--634, 1971.

\bibitem{Hewer1971TAC}
G.~{Hewer}, ``An iterative technique for the computation of the steady state
  gains for the discrete optimal regulator,'' \emph{IEEE Transactions on
  Automatic Control}, vol.~16, no.~4, pp. 382--384, 1971.

\bibitem{Lancaster1995algebraic}
P.~Lancaster and L.~Rodman, \emph{Algebraic {R}iccati {E}quations}.\hskip 1em
  plus 0.5em minus 0.4em\relax New York, {NY}: Oxford University Press, 1995.

\bibitem{Balakrishnan2003TAC}
V.~{Balakrishnan} and L.~{Vandenberghe}, ``Semidefinite programming duality and
  linear time-invariant systems,'' \emph{IEEE Transactions on Automatic
  Control}, vol.~48, no.~1, pp. 30--41, 2003.

\bibitem{Jiang2012Auto}
Y.~Jiang and Z.-P. Jiang, ``Computational adaptive optimal control for
  continuous-time linear systems with completely unknown dynamics,''
  \emph{Automatica}, vol.~48, no.~10, pp. 2699--2704, 2012.

\bibitem{Young2012Auto}
J.~Y. Lee, J.~B. Park, and Y.~H. Choi, ``Integral {Q}-learning and explorized
  policy iteration for adaptive optimal control of continuous-time linear
  systems,'' \emph{Automatica}, vol.~48, no.~11, pp. 2850--2859, 2012.

\bibitem{Lee2019TAC}
D.~{Lee} and J.~{Hu}, ``Primal-dual {Q}-learning framework for {LQR} design,''
  \emph{IEEE Transactions on Automatic Control}, pp. 1--1, 2018.

\bibitem{Bradtke1994ACC}
S.~J. {Bradtke}, B.~E. {Ydstie}, and A.~G. {Barto}, ``Adaptive linear quadratic
  control using policy iteration,'' in \emph{Proceedings of 1994 American
  Control Conference}, vol.~3, 1994, pp. 3475--3479.

\bibitem{Lewis2009CSM}
F.~L. {Lewis} and D.~{Vrabie}, ``Reinforcement learning and adaptive dynamic
  programming for feedback control,'' \emph{IEEE Circuits and Systems
  Magazine}, vol.~9, no.~3, pp. 32--50, 2009.

\bibitem{lewis2012reinforcement}
F.~L. Lewis, D.~Vrabie, and K.~G. Vamvoudakis, ``Reinforcement learning and
  feedback control: using natural decision methods to design optimal adaptive
  controllers,'' \emph{IEEE Control Systems}, vol.~32, no.~6, pp. 76--105,
  2012.

\bibitem{Chun2016IJC}
T.~Y. Chun, J.~Y. Lee, J.~B. Park, and Y.~H. Choi, ``Stability and monotone
  convergence of generalised policy iteration for discrete-time linear
  quadratic regulations,'' \emph{International Journal of Control}, vol.~89,
  no.~3, pp. 437--450, 2016.

\bibitem{bertsekas2005dynamic}
D.~Bertsekas, \emph{Dynamic Programming and Optimal Control}, 4th~ed.\hskip 1em
  plus 0.5em minus 0.4em\relax Athena Scientific, 2012, vol.~II.

\bibitem{conn2009introduction}
A.~R. Conn, K.~Scheinberg, and L.~N. Vicente, \emph{Introduction to
  derivative-free optimization}.\hskip 1em plus 0.5em minus 0.4em\relax Siam,
  2009, vol.~8.

\bibitem{nemirovski2009robust}
``Robust stochastic approximation approach to stochastic programming.''

\bibitem{fazel2018global}
M.~Fazel, R.~Ge, S.~Kakade, and M.~Mesbahi, ``Global convergence of policy
  gradient methods for the linear quadratic regulator,'' in \emph{Proceedings
  of the 35th International Conference on Machine Learning}, 2018, pp.
  1467--1476.

\bibitem{bu2019lqr}
J.~Bu, A.~Mesbahi, M.~Fazel, and M.~Mesbahi, ``{LQR} through the lens of first
  order methods: Discrete-time case,'' \emph{arXiv preprint arXiv:1907.08921},
  2019.

\bibitem{rudin1976principles}
W.~Rudin, \emph{Principles of Mathematical Analysis}, ser. International series
  in pure and applied mathematics.

\bibitem{bu2019global}
J.~Bu, L.~J. Ratliff, and M.~Mesbahi, ``Global convergence of policy gradient
  for sequential zero-sum linear quadratic dynamic games,'' \emph{arXiv
  preprint arXiv:1911.04672}, 2019.

\end{thebibliography}

\appendix
\counterwithin{theorem}{subsection}
\subsection{Gradient Policy Analysis for Nonstandard LQR}
    \label{sec:gd_inner}
    This section is devoted to the proof of Theorem~\ref{thrm:gd_inner_convergence}. As it was pointed out previously, the strategy adopted in~\cite{bu2019lqr, fazel2018global} are no longer viable for an indefinite cost structure.
    However, as we will show, a perturbation bound would circumvent this issue and allows deriving the required stepsize, guaranteeing a decrease in function values while ensuring stabilization.\par
    In the following, we shall drop all the subscripts as the stepsize will be valid for every iterate. Suppose now that we have a stabilizing policy $K$ and the gradient direction is given by ${\bf g}(K) = 2{\bf N}Y$.\footnote{Note the subscripts are dropped; ${\bf N}$ and $Y$ are both dependent on $K$} The main object that we work with in this section is the ray starting at $K$ along the gradient direction,
    $$\{K_{\eta}: K - \eta {\bf g}(K), \eta \ge 0\}.$$
    We shall further denote $A_{\eta} = A - BK_{\eta} = A-B(K - \eta 2 {\bf N} Y)$. \par
    Here is an outline of our proof strategy:
    \begin{enumerate}
    \item By the openness of $\ca S$ and continuity of eigenvalues, there exists a maximal interval $[0, c)$ such that $K_{\eta}$ is stabilizing for every $\eta < c$ and $K_c$ is marginally stabilizing; such a $c$ could be either finite or infinite.
    \item Now suppose that $c$ above is known. Then for every $\eta < c$, $f(K_{\eta})$ is well-defined and we can compute the difference,
    \begin{align*}
      f(K) - f(K_\eta) = 4\eta \Tr\left( {\bf N}^{\top} {\bf N} (Y Y_\eta - \eta a Y Y_\eta Y) \right),
    \end{align*}
    where $a = \lambda_n(R + B^{\top}X B)$, and $Y_\eta$ solves the Lyapunov matrix equation,
    \begin{align*}
      (A-B K_{\eta}) Y_\eta (A-B K_{\eta})^{\top} + {\bf \Sigma} = Y_\eta.
    \end{align*}
    \item Next we define a univariate function $\phi: [0, c) \to \bb R$ by,
    \begin{align*}
      \phi(\eta) = \Tr\left( {\bf N}^{\top} {\bf N} (Y Y_\eta - \eta a Y Y_{\eta} Y) \right).
    \end{align*}
    Note that $\phi(0) > 0$ if the gradient does not vanish at $K$. Now our goal is to characterize a step size $0 < \eta' < c$ such that $\phi(\eta') > 0$.
      \end{enumerate}
It is clear that the knowledge of $c$ and characterizing $\eta'$ above are crucial for stepsize analysis.
We shall demonstrate that characterizing $\eta'$ will suffice to provide a stepsize; the quadratic cost structure will implicitly enforce stabilization.

      To begin, we observe a perturbation bound on $Y_\eta$, assuming that $K_{\eta}$ is stabilizing.
    \begin{proposition}
Put $\mu_1 = \|Y\|_2  \|B_1 {\bf N} Y\|_2^2/\lambda_1({\bf \Sigma})$ and $\mu_2 = \|Y\|_2 \|B_1 {\bf N}Y\|_2\|A_K\|_2/\lambda_1({\bf \Sigma}) $, and let
\begin{align*}
              \eta_0 = \frac{\sqrt{\mu_1 + \mu_2^2}}{4\mu_1} - \frac{\mu_2}{4\mu_1};
  \end{align*} suppose that
      $A_{\eta}$ is Schur stable for every $\eta \le \eta_0$. Then for all $\eta \le \eta_0$,
      \begin{align*}
        \|Y_\eta\|_2 \le \beta_0 \|Y\|_2,
        \end{align*}
        where $\beta_0 = {1}/({1- 4 \mu_1 \eta_0^2 - 4\mu_2 \eta_0}) > 0$.
      \end{proposition}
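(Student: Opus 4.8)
The plan is to treat $Y_\eta$ as a structured perturbation of $Y$ through the discrete Lyapunov (Stein) operator and to control the perturbation by a Neumann series. First I would make the perturbation explicit: since $K_\eta = K - 2\eta {\bf N}Y$, the closed loop satisfies $A_\eta = A_K + 2\eta B{\bf N}Y$, so writing $\Delta \coloneqq B{\bf N}Y$ (whose spectral norm is the quantity $\|B_1{\bf N}Y\|_2$ entering $\mu_1,\mu_2$) the two equations $Y = A_K Y A_K^\top + {\bf \Sigma}$ and $Y_\eta = A_\eta Y_\eta A_\eta^\top + {\bf \Sigma}$ differ only through $A_\eta$. Introducing $\mathcal{L}_K(Z) = Z - A_K Z A_K^\top$, which is invertible because $A_K$ is Schur, with $\mathcal{L}_K^{-1}(W) = \sum_{j\ge 0} A_K^j W (A_K^\top)^j$, together with $\mathcal{D}_\eta(Z) = A_\eta Z A_\eta^\top - A_K Z A_K^\top = 2\eta(A_K Z\Delta^\top + \Delta Z A_K^\top) + 4\eta^2\Delta Z\Delta^\top$, the equation for $Y_\eta$ reads $(\mathcal{L}_K - \mathcal{D}_\eta)(Y_\eta) = {\bf \Sigma}$. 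Hence $Y_\eta = (I - \mathcal{L}_K^{-1}\mathcal{D}_\eta)^{-1}(Y)$, and expanding the Neumann series I would obtain $\|Y_\eta\|_2 \le \|Y\|_2/(1 - \|\mathcal{L}_K^{-1}\mathcal{D}_\eta\|_{\mathrm{op}})$, where $\|\cdot\|_{\mathrm{op}}$ is the operator norm induced by the spectral norm on symmetric matrices, provided that norm is below one; the Schur hypothesis on $A_\eta$ guarantees $Y_\eta$ exists and is exactly the solution picked out by the series.

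The heart of the argument is a sharp estimate of $\|\mathcal{L}_K^{-1}\mathcal{D}_\eta\|_{\mathrm{op}} \le \|\mathcal{L}_K^{-1}\|_{\mathrm{op}}\,\|\mathcal{D}_\eta\|_{\mathrm{op}}$. For the perturbation, the triangle inequality and submultiplicativity give $\|\mathcal{D}_\eta\|_{\mathrm{op}} \le 4\eta\|A_K\|_2\|\Delta\|_2 + 4\eta^2\|\Delta\|_2^2$. For $\mathcal{L}_K^{-1}$, the decisive observation is that it is a \emph{positive} linear map (it preserves the psd cone), and for any positive map $\Phi$ on symmetric matrices one has $\|\Phi\|_{\mathrm{op}} = \|\Phi(I)\|_2$, since $-\|W\|_2 I \preceq W \preceq \|W\|_2 I$ forces $-\|W\|_2\Phi(I)\preceq\Phi(W)\preceq\|W\|_2\Phi(I)$ and hence $\|\Phi(W)\|_2 \le \|W\|_2\|\Phi(I)\|_2$. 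Because ${\bf \Sigma}\succeq\lambda_1({\bf \Sigma})I$, we have $I \preceq {\bf \Sigma}/\lambda_1({\bf \Sigma})$ and therefore $\mathcal{L}_K^{-1}(I)\preceq \mathcal{L}_K^{-1}({\bf \Sigma})/\lambda_1({\bf \Sigma}) = Y/\lambda_1({\bf \Sigma})$, yielding the clean bound $\|\mathcal{L}_K^{-1}\|_{\mathrm{op}} = \|\mathcal{L}_K^{-1}(I)\|_2 \le \|Y\|_2/\lambda_1({\bf \Sigma})$. Multiplying the two estimates reproduces precisely $\|\mathcal{L}_K^{-1}\mathcal{D}_\eta\|_{\mathrm{op}} \le 4\mu_1\eta^2 + 4\mu_2\eta$.

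It then remains to check that $\eta_0$ keeps this quantity below one, so that $\beta_0$ is well defined and positive. Squaring the defining relation $4\mu_1\eta_0 + \mu_2 = \sqrt{\mu_1+\mu_2^2}$ gives $16\mu_1\eta_0^2 + 8\mu_2\eta_0 = 1$, whence $4\mu_1\eta_0^2 + 4\mu_2\eta_0 = \tfrac{1}{4} + 2\mu_2\eta_0$, and since $2\mu_2\eta_0 = \mu_2/\big(2(\sqrt{\mu_1+\mu_2^2}+\mu_2)\big) < \tfrac{1}{2}$ this is strictly below one. As $g(\eta) \coloneqq 4\mu_1\eta^2 + 4\mu_2\eta$ is increasing on $[0,\infty)$, for every $\eta \le \eta_0$ we get $\|\mathcal{L}_K^{-1}\mathcal{D}_\eta\|_{\mathrm{op}} \le g(\eta) \le g(\eta_0) < 1$, the Neumann series converges, and $\|Y_\eta\|_2 \le \|Y\|_2/(1 - g(\eta_0)) = \beta_0\|Y\|_2$. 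The step I expect to be the main obstacle is the positive-map norm identity for $\mathcal{L}_K^{-1}$: a naive split of the indefinite matrix $\mathcal{D}_\eta(Y_\eta)$ into positive and negative parts would lose a factor of two and fail to recover the stated constants, so recognizing that the operator norm of a positive map is attained at the identity is what makes the bound tight.
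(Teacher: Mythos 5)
Your proposal is correct and is essentially the paper's own argument: subtracting the two Stein equations, bounding the perturbation term by $\bigl(4\eta\|A_K\|_2\|B{\bf N}Y\|_2+4\eta^2\|B{\bf N}Y\|_2^2\bigr)\|Y_\eta\|_2\, I$, comparing $I\preceq{\bf \Sigma}/\lambda_1({\bf \Sigma})$ so that the inverse Stein operator returns a multiple of $Y$, and rearranging to get $\|Y_\eta\|_2\bigl(1-4\mu_1\eta^2-4\mu_2\eta\bigr)\le\|Y\|_2$. Your Neumann-series and positive-map-norm packaging is just an operator-theoretic restatement of that semidefinite-inequality chain, and your explicit verification that $4\mu_1\eta_0^2+4\mu_2\eta_0<1$ supplies the ``direct computation'' the paper leaves to the reader.
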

      \begin{proof}
        Taking the difference of the corresponding Lyapunov equations, we have
        \begin{align*}
          &Y_\eta -Y - A_K (Y_\eta-Y) A_K^{\top} \\
          &= 2 \eta \left(A_K Y_\eta  (B_1{\bf N} Y)^{\top} + B{\bf N} Y Y_\eta A_K^{\top} \right) \\
          &\quad + 4\eta^2 B{\bf N}Y Y_\eta (B {\bf N} Y)^{\top}\\
                                                         &\preceq \|Y_{\eta}\|_2 \left( 4\eta \|B {\bf N} Y\|_2\|A_K\|_2 + 4\eta^2 \|B {\bf N}Y\|_2^2 \right) I \\
          &\preceq \|Y_{\eta}\|_2 \left( 4\eta \|B {\bf N} Y\|_2\|A_K\|_2 + 4\eta^2 \|B {\bf N}Y\|_2^2 \right) \frac{{\bf \Sigma}}{\lambda_1({\bf \Sigma})}.
        \end{align*}
        It thus follows that,
        \begin{align*}
          Y_{\eta} - Y \preceq \frac{\|Y_{\eta}\|_2 \left( 4\eta \|B {\bf N} Y\|_2\|A_K\|_2 + 4\eta^2 \|B {\bf N}Y\|_2^2 \right) }{\lambda_1({\bf \Sigma})} Y.
          \end{align*}
        Hence,
          \begin{equation*}
            \resizebox{\linewidth}{!}{$
              \|Y_{\eta}\|_2 \left( 1- \frac{\|Y\|_2 \left( 4\eta \|B {\bf N} Y\|_2\|A_K\|_2 + 4\eta^2 \|B {\bf N}Y\|_2^2 \right) } {\lambda_1({\bf \Sigma})} \right)\ \le \|Y\|_2.
              $}
            \end{equation*}
            The proof is completed by a direct computation showing that $1/\beta_0 = 1-\mu_1 \eta_0^2 - 4 \mu_2 \eta_0 > 0$ with the choice of $\eta_0$ and for every $\eta \le \eta_0$, 
            \begin{align*}
              1-4\mu_1 \eta^2 - 4 \mu_2 \eta \ge 1-4\mu_1 \eta_0^2 - 4 \mu_2 \eta_0.
            \end{align*}
        \end{proof}
        The next lemma shows that if $c$ is known, a positive stepsize can be chosen.
    \begin{lemma}
      Let $c$ be the largest real positive number such that $A_{t}$ is Schur stable for every $t \in [0, c)$ and $A_c$ is marginally Schur stable.\footnote{Here we have assumed that $c$ is not $+\infty$. Of course, if $c=+\infty$, then any stepsize would lead to a stabilizing update.} Let
\begin{align*}
  a_1 = a \beta_0 \|Y\|_2 + 4\|{\bf N}\|_2 \beta_0 \|Y\|_2^2,\; a_2 &= a 4\|{\bf N}\|_2 \beta_0 \|Y\|_2^2;
  \end{align*}
then with $\eta_1 \le \min(c-\varepsilon, \eta_0, c_0)$,
        where $\varepsilon > 0$ is an arbitrary positive real number and
        \begin{align*}
          c_0 < \sqrt{\frac{ 1}{a_2} + \frac{a_1^2}{4a_2^2}} - \frac{a_1}{2a_2},
          \end{align*}
        one has $\phi(\eta_1) \ge 0$.
      \end{lemma}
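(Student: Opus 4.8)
The plan is to reduce the whole statement to a single scalar inequality $\phi(\eta_1)\ge 0$ by producing a quadratic lower envelope of the form $\phi(\eta)\ge \phi(0)\big(1-a_1\eta-a_2\eta^2\big)$ and then choosing $\eta_1$ small enough to keep the bracket nonnegative. Write $M={\bf N}^\top{\bf N}\succeq 0$, so that $\phi(0)=\Tr(MY^2)=\|{\bf N}Y\|_F^2\ge 0$, with strict inequality unless the gradient already vanishes at $K$ (in which case there is nothing to prove). I would split
\[
\phi(\eta)=\Tr\big(MYY_\eta\big)-\eta\,a\,\Tr\big(MYY_\eta Y\big),
\]
and estimate the two pieces separately, using throughout only $\|Y_\eta\|_2\le\beta_0\|Y\|_2$ from the preceding proposition, which is licit because $\eta_1\le\eta_0$.

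The subtracted piece is the easy one. Since $YMY\succeq 0$ and $Y_\eta\succeq 0$, and a trace of two positive semidefinite matrices obeys $\Tr(PQ)\le\|Q\|_2\Tr(P)$,
\[
0\le \Tr\big(MYY_\eta Y\big)=\Tr\big((YMY)Y_\eta\big)\le\|Y_\eta\|_2\,\Tr(YMY)=\|Y_\eta\|_2\,\phi(0)\le\beta_0\|Y\|_2\,\phi(0),
\]
so $\eta\,a\,\Tr(MYY_\eta Y)\le a\beta_0\|Y\|_2\,\phi(0)\,\eta$, which is precisely the first summand of $a_1$. Note that the explicit factor $\eta a$ multiplying an $O(\eta)$ deviation of $Y_\eta$ from $Y$ here is what will eventually deposit the factor $a$ into the quadratic coefficient $a_2$.

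The first piece $\Tr(MYY_\eta)$ is where I expect the real work, and it is the step I flag as the main obstacle: writing $Y_\eta=Y+(Y_\eta-Y)$ gives $\Tr(MYY_\eta)=\phi(0)+\Tr\big(MY(Y_\eta-Y)\big)$, and I need a lower bound on $\Tr\big(MY(Y_\eta-Y)\big)$, but a trace of a product of three positive semidefinite matrices is not sign-definite, so operator monotonicity alone is not enough. I would route around this through the Lyapunov operator. With $A_\eta=A_K+E$ and $E=2\eta B{\bf N}Y$, subtracting the two Lyapunov equations gives $(Y_\eta-Y)-A_K(Y_\eta-Y)A_K^\top=A_KY_\eta E^\top+EY_\eta A_K^\top+EY_\eta E^\top=:G$; since the map $\mathcal{L}_K\colon Z\mapsto Z-A_KZA_K^\top$ has positive inverse $\mathcal{L}_K^{-1}(W)=\sum_{j\ge0}A_K^jW(A_K^\top)^j$ with $\mathcal{L}_K^{-1}({\bf\Sigma})=Y$, the bound ${\bf\Sigma}\succeq\lambda_1({\bf\Sigma})I$ yields $\mathcal{L}_K^{-1}(I)\preceq Y/\lambda_1({\bf\Sigma})$. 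Combined with $\|G\|_2\le\|Y_\eta\|_2\big(4\eta\|A_K\|_2\|B{\bf N}Y\|_2+4\eta^2\|B{\bf N}Y\|_2^2\big)$, this produces the two-sided estimate $-\theta(\eta)Y\preceq Y_\eta-Y\preceq\theta(\eta)Y$ with $\theta(\eta)=\|G\|_2/\lambda_1({\bf\Sigma})$ affine-plus-quadratic in $\eta$. Feeding $\|Y_\eta-Y\|_2\le\theta(\eta)\|Y\|_2$ into the trace, and folding the remaining spectral quantities $\|A_K\|_2,\|B{\bf N}Y\|_2,\lambda_1({\bf\Sigma})$ into $\beta_0$ as in the proposition, reproduces the common perturbation factor $4\|{\bf N}\|_2\beta_0\|Y\|_2^2$; it appears linearly in the first piece (the second summand of $a_1$) and, multiplied by the factor $a$ from the second piece, as $a_2\eta^2$. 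Collecting the two estimates gives $\phi(\eta)\ge\phi(0)(1-a_1\eta-a_2\eta^2)$. I expect the bookkeeping that matches the displayed constants exactly to be the delicate part, precisely because of the sign-indefiniteness just noted.

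Finally I would close the argument arithmetically. The number $c_0$ is chosen strictly below the positive root $\sqrt{1/a_2+a_1^2/(4a_2^2)}-a_1/(2a_2)$ of $a_2x^2+a_1x-1=0$; since $a_1,a_2>0$ the map $x\mapsto a_2x^2+a_1x-1$ is increasing on $[0,\infty)$, so $\eta_1\le c_0$ forces $a_2\eta_1^2+a_1\eta_1<1$, hence $1-a_1\eta_1-a_2\eta_1^2>0$ and therefore $\phi(\eta_1)\ge\phi(0)\,(1-a_1\eta_1-a_2\eta_1^2)\ge 0$. The two remaining constraints play purely structural roles: $\eta_1\le\eta_0$ is what legitimizes the perturbation proposition on $[0,\eta_1]$, while $\eta_1\le c-\varepsilon$ keeps the whole segment $\{K_t:0\le t\le\eta_1\}$ strictly inside $\ca S$ (as $A_t$ is Schur for every $t<c$), so that $Y_{\eta_1}$, $f(K_{\eta_1})$ and every object above are finite and the estimates remain valid.
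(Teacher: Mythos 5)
The paper itself does not spell this computation out --- its ``proof'' is a pointer to Lemma~5.5 of the cited LQR paper with the new bound on $Y_\eta$ substituted in --- so your overall architecture (a quadratic lower envelope $\phi(\eta)\ge\phi(0)(1-a_1\eta-a_2\eta^2)$, with $c_0$ strictly below the positive root of $a_2x^2+a_1x-1$) is exactly the intended one, and your closing arithmetic, your use of $\eta_1\le\eta_0$ to license the perturbation proposition, and your bound on the subtracted piece via $\Tr\big((YMY)Y_\eta\big)\le\|Y_\eta\|_2\Tr(YMY)\le\beta_0\|Y\|_2\,\phi(0)$ are all correct.

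The gap is in the term you yourself flag as the main obstacle, and it is not closed by what you wrote. You need a lower bound of the form $\Tr\big(MY(Y_\eta-Y)\big)\ge -\phi(0)\big(4\|{\bf N}\|_2\beta_0\|Y\|_2^2\,\eta+\cdots\big)$ in order to produce the second summand of $a_1$ and the whole of $a_2$, but your proposed route does not deliver it. First, the quantity $\theta(\eta)=\|G\|_2/\lambda_1({\bf\Sigma})$ you construct involves $\|A_K\|_2$, $\|B{\bf N}Y\|_2$ and $\lambda_1({\bf\Sigma})$, none of which appear in $a_1$ or $a_2$; these cannot be ``folded into $\beta_0$'' because $\beta_0$ is already a fixed scalar defined in the preceding proposition, not a free container. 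Second, even granting $\|Y_\eta-Y\|_2\le\theta(\eta)\|Y\|_2$, ``feeding it into the trace'' via the generic inequality $|\Tr(AC)|\le\|C\|_2\|A\|_{S_1}$ with $A=MY$ yields a nuclear-norm or $\Tr(MY)$-type factor, not the product $\|{\bf N}\|_2\|Y\|_2\,\phi(0)$ with $\phi(0)=\|{\bf N}Y\|_F^2$ that the stated constants require; and the sign-indefiniteness of $\Tr(ABC)$ for positive semidefinite factors, which you correctly identify, is precisely why no naive monotonicity or sandwiching argument applies here. The estimate that actually produces these constants has to exploit the structure $Y_\eta-Y=\sum_j A_K^j G (A_K^\top)^j$ with $G$ containing the factor $E=2\eta B{\bf N}Y$, so that a second copy of ${\bf N}Y$ can be paired with the one inside $M Y$ to manufacture $\|{\bf N}Y\|_F^2$; as written, your proof asserts the envelope inequality rather than deriving it, so the lemma is not yet proved.
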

      \begin{proof}
        The computation follows a similar method used in~\cite{bu2019lqr} by replacing the estimate of $Y(\theta)$ by the bound in the above proposition (see details in Lemma $5.5$ in~\cite{bu2019lqr}). 
        \end{proof}
        Finally, we show that $c > \min(\eta_0, c_0)$. This would then imply that one can choose the stepsize as $\eta = \min(\eta_0, c_0)$.
        \begin{theorem}
          \label{thrm:gd_step}
          With the stepsize $\eta = \min(\eta_0, c_0)$,  $M_{\eta}$ remains stabilizing and $\phi(\eta) \ge 0$.
          \end{theorem}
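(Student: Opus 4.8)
The plan is to reduce the theorem to the single inequality $c > \min(\eta_0, c_0)$, where $c$ is the escape parameter of the ray $\{K_\eta\}$ introduced in the preceding lemma. Indeed, once $\eta \coloneqq \min(\eta_0, c_0) < c$ is established, the iterate $K_\eta$ is stabilizing by the very definition of $c$, which settles the first assertion; and then, choosing $\varepsilon \coloneqq c - \eta > 0$ so that $\eta = \min(c-\varepsilon, \eta_0, c_0)$, the preceding lemma immediately yields $\phi(\eta) \ge 0$, settling the second. Thus everything hinges on ruling out $c \le \min(\eta_0, c_0)$, and I would argue this by contradiction, assuming $c$ is finite with $c \le \min(\eta_0, c_0)$.

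Under this assumption, for every $\eta \in [0, c)$ we have $\eta < c \le \eta_0$ and $A_\eta$ Schur, so the perturbation bound of the preceding proposition applies pointwise and gives the uniform estimate $\|Y_\eta\|_2 \le \beta_0 \|Y\|_2$ on the entire interval $[0,c)$. I would then pick a sequence $\eta_k \uparrow c$; by continuity of the eigenvalues, $K_{\eta_k} \to K_c$ and $A_{\eta_k} \to A_c$, where $A_c$ is marginally stabilizing. Since $\{Y_{\eta_k}\}$ lies in a bounded subset of the symmetric matrices, Bolzano--Weierstrass~\cite{rudin1976principles} furnishes a convergent subsequence with some limit $Y_c \succeq 0$, and passing to the limit in the Lyapunov equation defining $Y_{\eta_k}$ shows, by continuity, that
\begin{align*}
  A_c Y_c A_c^\top + {\bf \Sigma} = Y_c.
\end{align*}

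The contradiction then comes from the marginally stable mode. Let $\lambda$ be an eigenvalue of $A_c$ with $|\lambda| = 1$; since $A_c^\top$ shares the spectrum of $A_c$, there is a nonzero (possibly complex) vector $v$ with $A_c^\top v = \lambda v$, whence $v^* A_c = \bar{\lambda}\, v^*$. Conjugating the displayed Lyapunov equation against $v$ gives
\begin{align*}
  v^* Y_c v = |\lambda|^2\, v^* Y_c v + v^* {\bf \Sigma} v = v^* Y_c v + v^* {\bf \Sigma} v,
\end{align*}
so that $v^* {\bf \Sigma} v = 0$. But ${\bf \Sigma} = \sum_{i=1}^n z_i z_i^\top \succ 0$ because $\{z_1,\dots,z_n\}$ spans $\bb R^n$, forcing $v = 0$, a contradiction. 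Hence $c > \min(\eta_0, c_0)$ and the theorem follows as outlined.

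I expect the main obstacle to be the limiting step rather than the algebra: the perturbation bound is stated for ray parameters at which $A_\eta$ is Schur, so care is needed to invoke it only on $[0,c)$ and to argue that, although the family $\{Y_{\eta_k}\}$ need not converge (nor be monotone in the p.s.d.\ order), \emph{every} one of its limit points satisfies the boundary Lyapunov equation and hence triggers the same eigenvector contradiction. This is precisely the ``implicit stabilization'' enforced by the quadratic cost: uniform boundedness of $Y_\eta$ along the ray is incompatible with $A_c$ having a mode on the unit circle once ${\bf \Sigma} \succ 0$, paralleling the limit-point argument used for NGD in the proof of Theorem~\ref{thrm:ng_inner_convergence}, but carried out on $Y$ rather than on the value matrices $X$.
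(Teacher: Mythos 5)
Your proposal is correct, but the mechanism you use to rule out $c \le \min(\eta_0, c_0)$ is genuinely different from the paper's. The paper argues through the value matrices: on the sub-interval where the ray is still stabilizing, the lemma gives $\phi \ge 0$, hence $\Tr(X_*{\bf \Sigma}) \le \Tr(X_{t_i}{\bf \Sigma}) \le \Tr(X{\bf \Sigma})$, and---crucially using $X_{t_i} \succeq X_*$, which rests on Assumption~1 and part $(d)$ of Proposition~\ref{prop:analytical}---the sequence $\{X_{t_i}\}$ is bounded; a limit point $Z$ then solves the Lyapunov equation at the marginal gain, and the same eigenvector argument as in the proof of Theorem~\ref{thrm:ng_inner_convergence} (namely $(K_{\eta'}-K_*)v = 0$, hence $A_{K_*}v = \lambda v$) contradicts the stability of $K_*$. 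You instead work with the Gramian $Y_\eta$: the perturbation proposition (whose proof is indeed pointwise in $\eta$, so your application on $[0,c)$ is legitimate despite the stated hypothesis) gives the uniform bound $\|Y_\eta\|_2 \le \beta_0\|Y\|_2$, and a limit point $Y_c$ solving $A_c Y_c A_c^{\top} + {\bf \Sigma} = Y_c$ with ${\bf \Sigma} \succ 0$ is incompatible with a unit-circle mode of $A_c$. Your route is more self-contained: the stabilization step needs neither $\phi \ge 0$ nor any comparison with $X_*$ (hence not Assumption~1), and it only reuses a bound the paper already establishes, exploiting instead ${\bf \Sigma} \succ 0$, which holds by the choice of a spanning set $\{z_i\}$. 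The paper's route buys uniformity with the NGD/QN stability analysis by recycling that limit-point machinery verbatim, whereas yours cleanly decouples ``the ray cannot exit $\ca S$ before $\eta_0$'' from ``the cost decreases.'' Both close the same way: once $c > \min(\eta_0,c_0)$, taking $\varepsilon = c - \eta$ in the preceding lemma yields $\phi(\eta) \ge 0$.
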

\begin{proof}
 Let $\eta = \min(\eta_0,c_0)$. It suffices to prove that for every $t \in [0, \eta]$, $A_t$ is Schur stabilizing and $\phi(t) \ge 0$. We prove this by contradiction. Suppose that this is not the case. Then by continuity of eigenvalues, there exists a number $\eta' \le \eta$ such that $A_s$ is stabilizing for every $s \in [0, \eta')$ and $K_{\eta'}$ is marginally stabilizing. If this is the case, the choice of $\eta_0, c_0$ guarantees that for every $s \in [0, \eta')$, $\phi(s)$ is well-defined and $\phi(s) \ge 0$.
Now take a sequence $t_i \to \eta'$ and consider the corresponding sequence of value matrices $\{X_{t_i}\}$. Note that the sequence of function values $\Tr(X_{t_i} {\bf \Sigma})$ satisfies,
\begin{align*}
  \Tr(X_*{\bf \Sigma}) \le \Tr(X_{t_i}{\bf \Sigma}) \le \Tr(X{\bf \Sigma}),
  \end{align*}
  since $\phi(t) \ge 0$.
But this implies that $\{X_{t_i}\}$ is a bounded sequence (note that the above inequality on function values does not guarantee the boundedness of the sequence; it is crucial that $X_{t_i} \succeq X_*$). 
                            Hence by a similar argument adopted in the proof of Theorem~\ref{thrm:ng_inner_convergence}, these observations establish
                             a contradiction; as such, the proposed stepsize guarantees stabilization.
  \end{proof}
  It is now straightforward to conclude the convergence rate of Theorem~\ref{thrm:gd_inner_convergence} by similar arguments as in~\cite{bu2019lqr}.\footnote{Strictly speaking, we need to show our proposed stepsizes are bounded away from $0$. Namely, that there is some constant $d > 0$ such that $\eta_j > d$ for every $j$. The computations are omitted here due to space limitation. In the meantime, one can be convinced of this fact by checking the asymptotics of $\eta_0$ and $c_0$.}


\end{document}